\title{Vehicle Routing Problems in the Age of Semi-Autonomous Driving}
\author[1]{Hins Hu}
\author[2]{Samitha Samaranayake}
\affil[1]{{\small Systems Engineering, Cornell University, zh223@cornell.edu}}
\affil[2]{{\small School of Civil and Environmental Engineering, Cornell University, samitha@cornell.edu}}
\date{}
\definecolor{cornellred}{HTML}{B31B1B}
\pgfplotsset{width=10cm,compat=1.9}
\newtheorem{example}{Example}[section]
\newtheorem{theorem}{Theorem}[section]
\newtheorem{proposition}[theorem]{Proposition}
\begin{document}

\maketitle

\begin{abstract}
We are in the midst of a semi-autonomous era in urban transportation in which varying forms of vehicle autonomy are gradually being introduced. This phase of partial autonomy is anticipated by some to span a few decades due to various challenges, including budgetary constraints to upgrade the infrastructure and technological obstacles in the deployment of fully autonomous vehicles (AV) at scale. In this study, we introduce the \textit{vehicle routing problem in a semi-autonomous environment} (VRP-SA) where the road network is not fully AV-enabled in the sense that a portion of it is either not suitable for AVs or requires additional resources in real-time (e.g., remote control) for AVs to pass through. Moreover, such resources are scarce and usually subject to a budget constraint. An exact mixed-integer linear program (MILP) is formulated to minimize the total routing cost of service in this environment. We propose a two-phase algorithm based on a family of \textit{feasibility recovering sub-problems} (FRP) to solve the VRP-SA efficiently. Our algorithm is implemented and tested on a new set of instances that are tailored for the VRP-SA by adding stratified grid road networks to the benchmark instances. The result demonstrates a reduction of up to 37.5\% in vehicle routing costs if the fleet actively exploits the AV-enabled roads in the environment. Additional analysis reveals that cost reduction is higher with more budget and longer operational hours.
\end{abstract}

\section{Introduction}
With the development of numerous advanced technologies related to autonomous driving, such as smart sensors, computer vision, and high-speed wireless communication, the deployment of autonomous vehicles (AV) fleets in all kinds of transportation is becoming more and more viable. More importantly, replacing human-driven vehicles (HDV) with AVs in many businesses may potentially reduce the operational cost dramatically as drivers' wages contribute a great portion of it. In the trucking industry, according to a report \cite{leslie2022analysis}  by the American Transportation Research Institute, the driver-based cost contributed 35.8\% of the average marginal cost per mile in 2011 and this number climbed up to 40.2\% in 2022 due to increases in labor costs. If we deduct the amortized cost of vehicle purchases and leases, the percentage of the driver-based cost to the operational cost per mile in 2011 and 2019 are 37.6\% and 47.2\% respectively.

Although the AV industry has seen significant advancements and substantial investment, a fully autonomous driving environment remains years away. The era of \textit{semi-autonomous driving} is expected to persist for several decades due to numerous barriers. First, key technologies such as motion planning and object detection continue to pose significant bottlenecks. According to SAE International’s classification system \cite{sae2018taxonomy}, autonomous driving is categorized into six levels, ranging from level 0 (no automation) to level 5 (full automation). As of March 2021, Honda became the first manufacturer to offer a legally approved level-3 vehicle \cite{Honda2021}, capable of autonomous driving in certain conditions, though human intervention is still required in some situations. Second, mixed traffic environments, with both AVs and HDVs, are inevitable due to factors such as consumer preference and the long service life of traditional automobiles. A survey on commuting preferences \cite{HABOUCHA201737} conducted with 721 participants found that many individuals remain hesitant to adopt AVs. Even with a fully subsidized shared AV service, 44\% of respondents still preferred regular vehicles, and only 75\% expressed interest in switching to shared AVs. Third, achieving full automation in some scenarios is more challenging than in others. For example, high-density urban areas, compared to long-haul truck platooning, present greater complexity for level-5 automation due to intricate road networks, frequent interactions between vehicles and pedestrians, and the existence of traffic control. Fourth, autonomous driving requires significant infrastructure upgrades, particularly in vehicle-to-infrastructure (V2I) communication systems, to facilitate precise AV maneuvers such as lane-changing and merging. However, the investment required for these upgrades is often constrained by budgetary limitations, making it a long-term undertaking. Finally, legislative and policy frameworks tend to lag behind technological development. Concerns around safety, equity, and privacy often drive a more conservative approach to regulating autonomous driving, further delaying widespread adoption.

Given the limitations aforementioned, new challenges arise when deploying AVs in a semi-autonomous environment. Specifically, road networks are not fully accessible to AV fleets in the sense that certain levels of AVs may be prohibited from traversing a subset of road segments, leading to a classification of roads based on their compatibility with different levels of AV autonomy. In the simplest case, the road network can be modeled as a bi-level system comprising \textit{ordinary roads} and \textit{AV-enabled roads}. Additionally, a critical feature of the semi-autonomous environment is that not all AVs operate at full autonomy (i.e., level 5). Some may require external resources, such as real-time remote control, to navigate AV-enabled roads. These resources, however, are typically costly. For instance, in October 2020, Waymo launched a geo-fenced semi-autonomous ride-hailing service in Phoenix, U.S., supported by a team of remote engineers available to intervene in contingencies \cite{Waymophoenix}. Similarly, Enride has deployed electric autonomous trucks on low-traffic private roads within logistics and manufacturing hubs and is working toward operating them on public roads with remote supervision \cite{enride2022}.

With that said, optimal vehicle dispatching and routing strategies at the operational level, traditionally modeled by the vehicle routing problem (VRP), need to be re-examination and re-designed, particularly for high-density urban areas. Moreover, vehicle dispatching schedules are crucial because the simultaneous demand for resources by all AVs must not exceed the maximum capacity. Exceeding this capacity can result in system failures with severe consequences, such as collisions. From a resource allocation perspective, fleet operators need to assign resources to AVs effectively by designing vehicle schedules based on prior knowledge of road networks. To address these new challenges, a holistic optimization approach is essential. Therefore, we propose a novel variant of the VRP, termed the \textit{Vehicle Routing Problem in the Semi-Autonomous Environment} (VRP-SA).

\textbf{The contribution of our work is threefold:} (1) We formalize a novel vehicle routing problem called VRP-SA to address the new challenges of deploying AV fleets in a semi-autonomous environment. (2) We provide a practically efficient solution approach for the VRP-SA. (3) We analyze the impact of varying unit routing costs and the density of AV-enabled roads on the deployment of AVs in the age of semi-autonomous driving.

The paper is organized as follows: Section \ref{sec:literature} briefly reviews the history of VRP and relevant literature on VRP variants involving mixed fleets and limited resources. Section \ref{sec:description} presents a formal description of the VRP-SA. Section \ref{sec:milp} introduces an exact mixed-integer linear program (MILP) for the VRP-SA. Section \ref{sec:milp_extend} discusses some extended studies in the MILP to enhance the model's comprehensiveness and adaptability. Section \ref{sec:algo} presents a two-phase algorithm based on a family of feasibility recovering sub-problems (FRP) to solve the VRP-SA efficiently. Section \ref{sec:exp} presents a new set of instances tailored for the VRP-SA and analyzes the results of numerical experiments conducted on these instances. Section \ref{sec:conclusion} concludes the paper and outlines potential directions for future research.

\section{Literature Review} \label{sec:literature}
VRP was first formulated as an integer linear program by Dantzig and Ramser \cite{dantzig1959truck} in 1959. The seminal heuristic algorithm to solve it was developed by Clark and Wright \cite{clarke1964scheduling} in 1964. The proof of the NP-hardness of VRP was provided by Lenstra and Kan \cite{lenstra1981complexity} in 1981. In addition to these milestones, extensive studies in VRP models (e.g. \cite{golden1984fleet}
\cite{kolen1987vehicle} \cite{savelsbergh1995general}
\cite{renaud1996tabu} \cite{erdougan2012green}
\cite{schneider2014electric}), construction heuristics (e.g. \cite{gillett1974heuristic} \cite{fisher1981generalized} \cite{bramel1995location}), meta-heuristics (e.g. \cite{homberger1999two} \cite{bell2004ant} \cite{pisinger2007general}), branch-and-bounds approaches (i.e., \cite{desrochers1992new} \cite{fischetti1994branch} \cite{fukasawa2006robust}), and learning-based approaches (e.g. \cite{nazari2018reinforcement} \cite{kool2018attention} \cite{li2021learning}) have been conducted and published over the past decades. Numerous benchmark data sets (e.g. \cite{solomon1987algorithms} \cite{augerat1995computational} \cite{uchoa2017new}) have also been created to support the performance tests of newly-developed VRP algorithms. The most recent research on VRP follows a trend toward designing a general-purpose algorithm by the hybridization of various frameworks to solve a wide range of VRP variants efficiently and on a larger scale. The representative work is the unified hybrid genetic search (UHGS) developed by Vidal et al. \cite{VIDAL2014658}. More comprehensive literature reviews from different aspects of VRP are referred to these papers (\cite{braysy2005vehicle_1} \cite{braysy2005vehicle_2} \cite{montoya2015literature} \cite{kocc2016thirty} \cite{lin2014survey} \cite{braekers2016vehicle}) and this book \cite{toth2014vehicle}.

A key feature of the VRP-SA is its mixed fleet of AVs and HDVs, which can be modeled by the family of Heterogeneous Fleet Vehicle Routing Problems (HFVRP) involving only two types of vehicles. This topic is covered in a chapter by Toth and Vigo \cite{toth2014vehicle}. The HFVRP family can be further refined into different categories depending on fleet constraints (limited or unlimited) and cost structures (vehicle-dependent or vehicle-independent). Research on exact approaches for the HFVRP is relatively limited. The most effective exact approach currently available is the branch-and-cut-and-price algorithm developed by Baldacci and Mingozzi \cite{baldacci2009unified}. In contrast, most existing approaches are heuristic-based, such as the multi-start adaptive memory programming (MAMP) by Li et al. \cite{LI20101111}, the variable neighborhood search (VNS) by Imran et al. \cite{IMRAN2009509}, and the memetic algorithms by Prins \cite{PRINS2009916}. Currently, the UHGS approach \cite{VIDAL2014658} is recognized as the leading method for the HFVRP.

One recent work by Molina et al. \cite{MOLINA2020103745} also discussed the VRP with a limited number of resources available. However, their focus is on scenarios where resource limitations—such as drivers, vehicles, or other equipment—prevent the servicing of all customers. This contrasts with our work, where limited resources (e.g., remote control) shared across the entire AV fleet can influence the routing decisions of individual vehicles when serving multiple customers.

\section{The Problem Statement of VRP-SA} \label{sec:description}
The VRP-SA admits the following inputs: (1) A directed graph $G = (V, E = E^a \cup E^o)$ representing the underlying road network, where $V$ is the set of nodes representing road intersections, and $E^a$ and $E^o$ are the sets of edges representing AV-enabled and ordinary road segments, respectively. (2) A depot $o \in V$ from which all vehicles depart and to which they return after the service. (3) A time horizon $[0, T]$ of operations (e.g., a day). (4) A set of customers $D \subset V$ with a demand vector $\bm{d} \in \mathbb{R}^{|D|}_{++}$. (5) A mixed vehicle fleet $M = M^a \cup M^h$, where $M^a$ is the set of AVs and $M^h$ is the set of HDVs, all with identical capacity $W \in \mathbb{R}_{++}$. (6) A pair of fixed costs $(f^a, f^h)$ of dispatching an AV and an HDV, respectively. (7) A vector of routing costs $\bm{c} \in \mathbb{R}_{++}^{3 \times |E|}$, where each cost is associated with a road segment in $E$ and varies by vehicle type and road type. The cost structure is detailed in Table \ref{tab:cost}. Here, $c_e^1$ and $c_e^2$ represent the routing costs for an AV traveling on an AV-enabled road segment $e \in E^a$ and an ordinary road segment $e \in E^o$, respectively. Similarly, $e_e^0$ denotes the routing cost for an HDV traversing any road $e \in E$, regardless of type. Two constant cost adjustment factors, $\eta_1$ and $\eta_2$, are applied uniformly across all edges. (8) A vector of travel times $\Delta \bm{t} \in \mathbb{R}_{++}^{|E|}$, with each entry corresponding to a road segment. (9) A budget $B \in \mathbb{N}_+$ representing the number of remote controllers available to assist AVs in real-time while they are traversing ordinary road segments. All aforementioned notations are summarized in Table \ref{tab:notations} for reference in later sections.
\begin{table}[!htbp]
    \centering
    \begin{tabular}{|c|c|c|}
        \cline{1-3}
         & AV-Enabled Roads & Ordinary Roads\\ \hline
        AVs & $c^{1}_e = \eta_1 \cdot c^0_e$ & $c^{2}_e = \eta_2 \cdot c^0_e$ \\ \hline
        HDVs & $ c^0_e$ & $ c^0_e$ \\ \hline
    \end{tabular}
    \caption{The routing cost structure of a vehicle traversing road segment $e$}
    \label{tab:cost}
\end{table}

The problem is subject to the following assumptions: (1) HDVs can traverse the entire road network freely without external resources, whereas AVs require additional support from remote controllers to pass through ordinary roads. Specifically, an AV locks one remote controller upon entering an ordinary road from an AV-enabled road and releases it upon returning to an AV-enabled road. (2) As outlined in Table \ref{tab:cost}, the routing cost for a vehicle on a given road segment depends on both the vehicle and road types. We further assume that, in the age of semi-autonomous driving, the routing cost for an AV on an AV-enabled road is lower than that for an HDV on the same road (i.e., $\eta_1 < 1$). Conversely, on an ordinary road, the cost for an AV exceeds that of an HDV (i.e., $\eta_2 > 1$). This assumption is justified by the significant cost reduction from eliminating driver-related expenses on AV-enabled roads. However, on ordinary roads, the scarcity and high cost of remote controllers lead to increased operational expenses, surpassing even the driver-related portion. (3) The dispatched times for vehicles are flexible within $[0, T]$, provided that all vehicles return to the depot before $T$.

The objective of the VRP-SA is to find a minimum-cost dispatching and routing strategy that serves all customers while satisfying the budget constraint of remote controllers, the return time constraints, and the capacity constraints. The fixed costs for establishing the real-time remote control system are treated as overheads and thus excluded from our decision making. Additionally, we do not integrate other commonly seen conventional constraints, such as time windows for customers and the order of pick-up and delivery, as they are relatively independent of the core issues addressed in this paper.

\section{The Mixed-Integer Linear Program} \label{sec:milp}
In this section, we will first briefly discuss a challenge of formulating the VRP-SA as a mixed-integer linear program (MILP). Next, we will establish a connection between a key characteristic of the VRP-SA and a simpler classical combinatorial optimization problem known as the \textit{Steiner Traveling Salesman Problem} (STSP). Leveraging several properties of the STSP, we will then demonstrate how to construct an \textit{expanded graph} based on the original road network and derive an exact MILP formulation for our problem on this expanded graph.

Many VRP models and algorithms are formulated and designed based on the \textit{metric closure} of the underlying road network \cite{toth2014vehicle}. The metric closure of a graph $G$ on a subset of nodes $D$ is a complete graph $\overline{G}$ on $D$, where each edge is weighted by the cost of the shortest path between the two corresponding nodes in $G$. This transformation allows each vehicle route in the solution to be represented as an ordered sequence, indicating the order of serving the subset of customers assigned to that vehicle. The low-level routing in the real-world road network is pre-determined by computing the minimum-cost paths between all pairs of customers. This trick significantly reduces the number of decision variables and constraints required to model a VRP, resulting in a more compact formulation. For instance, it ensures that each customer node is visited exactly once in the optimal solution, enabling the use of a constraint akin to unit flow conservation \cite{dantzig1959truck}. This pre-processing step, which transforms the underlying network into its metric closure, is illustrated in Figure \ref{fig:metric_closure}.

However, this trick is not applicable to the VRP-SA due to the presence of mixed road types in the underlying network. While shortest paths can still be pre-determined, a vehicle may not always travel along the shortest path between two customers in the optimal solution. This is because its ability to navigate an ordinary road segment depends on the continuous availability of a remote controller during the entire period it remains on the segment, from entry to exit. Consequently, the feasibility of selecting ordinary road segments for different AV routes is interdependent and constrained by a real-time enforced budget. Figure \ref{fig:metric_closure} further demonstrates that, once the network is transformed into its metric closure, all information about the road type of each edge is lost.
\begin{figure}[htbp!]
    \centering
    \includegraphics[width=0.9\linewidth]{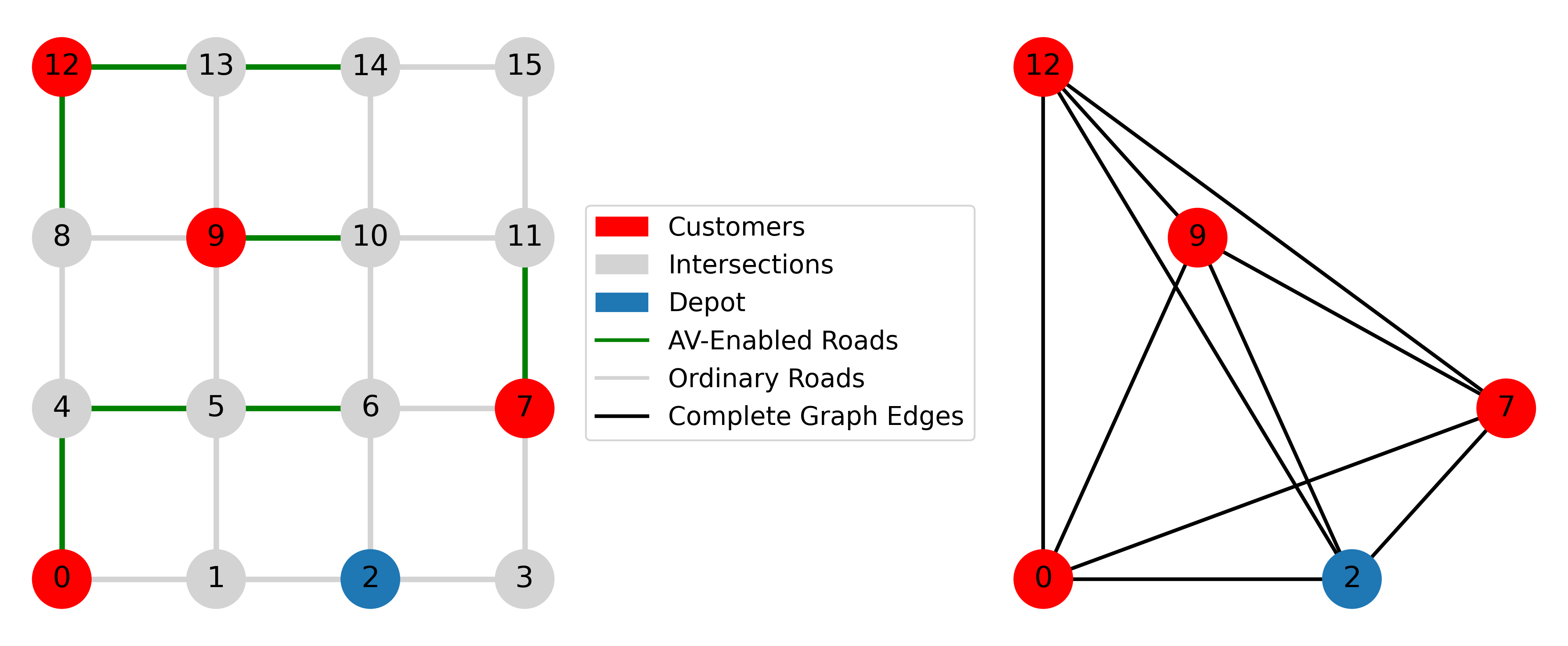}
    \caption{A toy example of a road network of mixed road types and its metric closure}
    \label{fig:metric_closure}
\end{figure}

Therefore, to formulate an MILP for the VRP-SA, we must work directly on the original road network. A key characteristic of an optimal solution to the VRP-SA in the original network is that any node may be visited more than once if necessary, which features the STSP in its most basic form. The formal definition of the STSP is as follows: Given an undirected and connected graph, a subset of nodes designated as required nodes to be served, a distinguished required node serving as the depot, and positive costs associated with all edges, the objective is to determine a minimum-cost route that visits all required nodes. Essentially, the Steiner version of the TSP relaxes the requirement for the graph to be complete \cite{RODRIGUEZPEREIRA2019615}. We identify the following two lemmas of the STSP, with formal proofs provided in Appendix \ref{app: A}.

\begin{restatable}{lemma}{lemmanode} \label{lemma:1}
In an optimal solution to the STSP in a directed graph, a node can be visited at most $n$ times, where $n$ is the number of required nodes in the graph. 
\end{restatable}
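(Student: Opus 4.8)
The plan is to argue by contradiction via a \emph{segment exchange} argument on an optimal closed walk. Let $W$ be an optimal solution to the STSP, viewed as a closed walk based at the depot $o$, and fix a node $v$ that is visited $k$ times by $W$. I would first regard $W$ cyclically (identifying its start and end occurrences of $o$) and cut it at each of the $k$ occurrences of $v$, obtaining $k$ segments $S_1, \dots, S_k$, each of which is a walk from $v$ to $v$ using at least one edge. Observe that $v$ itself lies in every segment, as its two endpoints.

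Suppose toward a contradiction that $k > n$. For each segment $S_i$ there are two possibilities: either every required node appearing in $S_i$ also appears in some other segment $S_j$ with $j \neq i$, or $S_i$ contains a required node $r_i$ that appears in no other segment. In the first case, delete $S_i$ from $W$ and splice the two neighbouring occurrences of $v$ together; the result is again a closed walk that starts and ends at $o$, still visits $v$ (note $k \geq 2$ since $k > n \geq 1$), and still visits every required node, and because all edge costs are strictly positive and $S_i$ is non-empty it is strictly cheaper than $W$, contradicting optimality. Hence the second case must hold for every $i$: each $S_i$ has a private required node $r_i$; since $v$ belongs to every segment we have $r_i \neq v$, and the privacy condition forces $r_1, \dots, r_k$ to be pairwise distinct.

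This yields $k$ distinct required nodes $r_1, \dots, r_k$, all different from $v$; if $v$ is itself required we even obtain $k+1$ distinct required nodes. Either way the number of required nodes is at least $k$, i.e. $n \geq k$, contradicting $k > n$. Therefore $v$ is visited at most $n$ times, as claimed.

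I expect the main care to be in the bookkeeping of the splicing step — confirming that after deleting a segment one still has a legitimate closed walk through the depot covering all required nodes — together with cleanly dispatching the degenerate cases ($v = o$, a segment that is a single self-loop, and $k = 1$). Directedness does not obstruct the argument: the segments are concatenated head-to-tail at $v$, so removing one preserves a valid directed closed walk, and no step relies on reversing edges.
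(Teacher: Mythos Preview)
Your argument is correct and takes a genuinely different route from the paper. The paper cuts the optimal tour at the \emph{required} nodes: between any two consecutive required nodes the sub-walk must be a minimum-cost path (else replace it), hence simple under strictly positive costs, and counting these sub-walks bounds the multiplicity of any fixed node; the star network is then exhibited to show the bound is tight. You instead cut at the occurrences of the node $v$ itself and argue by deletion that every resulting $v$--$v$ segment must host a required node private to it. The paper's decomposition delivers, as a by-product, the structural observation that an optimal Steiner tour is a concatenation of shortest paths between required nodes; your exchange argument is more self-contained and avoids that auxiliary step. One bookkeeping point to pin down when you write it out: for $v=o$, identifying the two endpoints of $W$ leaves only $k-1$ cyclic occurrences and hence $k-1$ segments, so you obtain $k-1$ private required nodes distinct from $o$, and adding $o$ back still gives $n\ge k$ --- precisely the case you already flag for separate handling.
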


\begin{restatable}{lemma}{lemmaedge} \label{lemma:2}
In an optimal solution to the STSP in a directed graph, an edge can be traversed at most $n-1$ times, where $n$ is the number of required nodes in the graph.
\end{restatable}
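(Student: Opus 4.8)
I would represent an optimal STSP solution as a closed walk $W$ that starts and ends at the depot and visits every required node, and argue by excision of redundant sub-walks. Fix the edge $e=(u,v)$ and let $k$ be the number of times $W$ traverses it; the goal is $k\le n-1$, and I may assume $k\ge 2$ since otherwise this is immediate (note $n\ge 1$). Mark the $k$ steps at which $W$ traverses $e$ and cut the cyclic walk immediately after each of them. This splits $W$ into $k$ arcs $W_1,\dots,W_k$ that together use every step of $W$ exactly once; each $W_j$ is itself a closed walk leaving and returning to $v$, contains exactly one copy of $e$ (its last step), and --- since consecutive arcs meet only at the cut instants, all of which sit at $v$ --- the vertex $v$ belongs to at least two of the arcs.

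The heart of the argument is the following exchange step: in an optimal $W$, each arc $W_j$ must contain a \emph{private} required node, meaning a required node that $W$ visits only within $W_j$. For if some $W_{j_0}$ had no private required node, then deleting $W_{j_0}$ from $W$ (which is legitimate because $W_{j_0}$ both begins and ends at $v$) leaves a shorter closed walk still touching every required node; rotating it to start at the depot yields a feasible tour of strictly smaller cost, since edge costs are positive and $W_{j_0}$ carries at least the copy of $e$ --- contradicting optimality. Because the arcs partition the steps of $W$, distinct arcs cannot share a private required node, and $v$ itself cannot be private to any arc since it lies in two of them. Hence $W_1,\dots,W_k$ supply $k$ distinct required nodes, none equal to $v$; this already gives $k\le n$, and gives $k\le n-1$ whenever $v$ is a required node.

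To recover the missing unit when $v$ is not required, I would run the mirror-image excision, cutting $W$ immediately \emph{before} each traversal of $e$ to get $k$ arcs closed at $u$, and conclude symmetrically that no private node equals $u$ either; alternatively one can contract $e$ and apply Lemma~\ref{lemma:1} to the resulting super-vertex. The step I expect to be the genuine obstacle is precisely this one --- excluding $k=n$ when neither endpoint of $e$ is a required node --- because the excision argument is by design insensitive to whether $u$ and $v$ are required. Closing it looks to require either a more global rerouting argument (showing a tour forced through $e$ exactly $n$ times can always be locally improved) or an extra structural hypothesis on the instance, so I would concentrate the effort there and would sanity-check the sharp bound on a few small directed examples before settling on $n-1$ rather than $n$.
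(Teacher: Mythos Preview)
Your excision argument is sound and cleanly establishes $k\le n$; it is also a genuinely different decomposition from the paper's. The paper (following its proof of Lemma~\ref{lemma:1}) splits the optimal tour into the $n$ minimum-cost sub-paths between consecutive required nodes and uses simplicity of each sub-path to bound edge multiplicities, whereas you cut at the $k$ traversals of $e$ and force a private required node into each resulting arc. Both routes arrive naturally at $k\le n$; neither produces $n-1$ without an extra step.

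The gap you isolate --- ruling out $k=n$ when neither endpoint of $e$ is required --- is not merely the hard case but an unclosable one: the bound $n-1$ is false for directed graphs. Take vertices $o,u,v,r$, unit-cost arcs $o\!\to\! u$, $u\!\to\! v$, $v\!\to\! r$, $r\!\to\! u$, $v\!\to\! o$, and required set $\{o,r\}$, so $n=2$. The unique optimal tour is $o\,u\,v\,r\,u\,v\,o$, which traverses $(u,v)$ exactly $n=2$ times; neither $u$ nor $v$ is required, precisely the configuration your argument cannot (and should not) exclude. So your instinct to sanity-check whether the sharp bound is $n$ rather than $n-1$ was correct, and no local rerouting or contraction trick will close the gap. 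The paper's proof, being ``similar to that of Lemma~\ref{lemma:1}'', does not close it either: the sub-path count is $n$, not $n-1$, when the edge's endpoints are non-required, and the one-way-dominated example offered there only witnesses that $n-1$ is attainable, not that $n$ is not. For the paper's downstream use the honest bound $k\le n$ is harmless --- it merely shifts the layer count in Proposition~\ref{prop: k} by one.
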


The observations in Lemma \ref{lemma:1} and Lemma \ref{lemma:2} can be naturally extended to the VRP-SA. Specifically, for each node in the set \( V \), an upper bound on the number of times it can appear in an optimal solution to the VRP-SA can be established. This result is presented in Proposition \ref{prop: k}, with a detailed proof provided in Appendix \ref{app: A}.

\begin{restatable}{proposition}{propositionk} \label{prop: k}
Let $P(m)$ be the route of vehicle $m$ in the optimal solution to the VRP-SA. Let $W$ be the capacity of vehicle $m$. Let $\{d_i: \; i = 1, \dots, |D| \}$ be a non-decreasing sequence indicating the demands of customer set $D$. Then, a node $v \in P(m)$ can be visited by vehicle $m$ at most $k + 1$ times, and an edge $(u, v), \; \forall \; u, v \in P(m)$ can be traversed by vehicle $m$ at most $k$ times, where $k$ is defined as the largest integer $j = 1, \dots, |D|$ satisfying $\sum_{i=1}^j d_i \leq W$.
\end{restatable}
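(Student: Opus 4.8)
The plan is to reduce the statement to Lemmas~\ref{lemma:1} and~\ref{lemma:2} by observing that, in an optimal VRP-SA solution, the route $P(m)$ of each vehicle behaves like an optimal STSP route on its own set of required nodes, and that this set is small because of the capacity constraint. First I would fix a vehicle $m$ and let $S \subseteq D$ be the set of customers it serves. Since every vehicle departs from and returns to $o$, the route $P(m)$ is a closed walk rooted at $o$ in $G$ visiting every node of $R_m := S \cup \{o\}$, i.e., a feasible STSP route for the instance on $G$ with required set $R_m$. The capacity constraint $\sum_{v \in S} d_v \le W$ together with $d_1 \le \dots \le d_{|D|}$ forces $|S| \le k$: if $|S| = j$ then $\sum_{i=1}^{j} d_i \le \sum_{v \in S} d_v \le W$, so by maximality of $k$ we get $j \le k$. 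Hence $|R_m| = |S|+1 \le k+1$, and it suffices to show that $P(m)$ visits each node at most $|R_m|$ times and traverses each edge at most $|R_m|-1$ times, i.e., the bounds of Lemmas~\ref{lemma:1} and~\ref{lemma:2} with $n = |R_m|$.

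It thus remains to argue that $P(m)$ may be taken to satisfy these per-vehicle bounds. Suppose, for contradiction, that in the optimal solution some node $v$ is visited by $m$ at least $|R_m|+1$ times (the case of an edge traversed at least $|R_m|$ times is analogous, using the construction from the proof of Lemma~\ref{lemma:2}). Exactly as in the proof of Lemma~\ref{lemma:1}, the visits to $v$ partition $P(m)$ into at least $|R_m|+1$ closed sub-walks based at $v$; since $|R_m|$ required nodes can lie in the interiors of at most $|R_m|$ of them, at least one sub-walk $Q$ visits no required node other than (possibly) its base $v$. Because all entries of $\bm c$ and of $\Delta \bm t$ are strictly positive, $Q$ has strictly positive routing cost and strictly positive duration. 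I would then modify the solution by excising $Q$: vehicle $m$ follows $P(m)$ but, upon first reaching $v$ at the start of $Q$, remains idle at $v$ for the duration of $Q$ rather than traversing it, then continues along the remainder of $P(m)$ with its original timing.

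The crux — and the only place the semi-autonomous setting adds anything beyond the STSP argument — is checking that this surgery preserves feasibility of the \emph{whole} VRP-SA solution. Vehicle $m$ still serves exactly $S$, so all customers remain covered and the capacity constraint is untouched; the modified route of $m$ has the same total duration as $P(m)$ (idling replaces $Q$), so the return-time constraint still holds; and, crucially, idling at $v$ uses no remote controller, the retained portions of $m$'s route keep their original times, and every other vehicle's schedule is left completely unchanged, so the number of AVs simultaneously on ordinary roads at any instant can only weakly decrease and the budget constraint still holds. Hence the modified solution is feasible, yet its total routing cost is strictly smaller since $Q$ had positive cost and idling is free, contradicting optimality. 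I expect this feasibility bookkeeping around the real-time budget to be the main obstacle; once the idling device is in place, the rest is exactly the combinatorics of Lemmas~\ref{lemma:1} and~\ref{lemma:2}.
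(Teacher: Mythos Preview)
Your proposal is correct and follows the paper's approach: bound the number of customers served by vehicle $m$ via the capacity constraint to get $|D(m)| \le k$, then invoke Lemmas~\ref{lemma:1} and~\ref{lemma:2} with $n = |D(m)| + 1$ required nodes. You are in fact more careful than the paper, which simply asserts without justification that $P(m)$ is the optimal STSP tour on $D(m)\cup\{o\}$ and then quotes the lemmas; your excision-plus-idling device directly verifies that removing a redundant sub-walk preserves feasibility of the full VRP-SA solution (including the real-time budget constraint), a point the paper's proof glosses over entirely.
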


Building on the result of Proposition \ref{prop: k}, we propose an approach to formulate an exact MILP for the VRP-SA by constructing a \textit{$k$-layer expanded graph} $G_e$ from the original network graph $G$, where $k$ is the number defined in Proposition \ref{prop: k}. The construction process is as follows:
\begin{enumerate}
    \item Duplicate $G$ by $k$ times and stack them in layers.
    \item Create an dummy node $s$ called the \textit{sink depot} on the base layer. 
    \item For every two adjacent layers, add an dummy directed edge pointing from every customer on the lower layer to its duplicate customer on the upper layer.
    \item For every duplicate depot on the duplicate layers, add an dummy directed edge pointing from the depot to the sink depot $s$.
    \item Assign zero cost and zero travel time to each dummy edge. 
\end{enumerate}

Accordingly, additional notations are introduced to account for duplication in the expanded graph and to support the MILP formulation later in this section. These notations are clearly explained and summarized in Table \ref{tab:notations}, so their definitions will only be repeated in the text when necessary. 
\begin{table}[!t]
    \centering
    \renewcommand{\arraystretch}{1.05}
    \begin{tabular}{l|l} \hline
    \textbf{Notation} & \textbf{Definition} \\ \hline
    $G$ & The graph representing the underlying road network  \\
    $V$ and $E$ & The entire set of nodes and edges in graph $G$ \\
    $E^a$ and $E^o$ & The set of edges representing AV-enabled and ordinary roads \\
    $D$ & The set of customers \\
    $\bm{d}$ & The demand vector of customers \\
    $M$ & The entire set of vehicles \\
    $M^a$ and $M^h$ & The set of AVs and HDVs \\
    $W$ & The uniform capacity of all vehicles \\
    $\bm{c}$ & The vector of edge-wise routing costs \\
    $f^a$ and $f^h$ & The fixed costs of dispatching an AV and an HDV \\ 
    $\eta_1$ and $\eta_2$ & The cost adjustment factors for AVs on two types of roads \\
    $\Delta\bm{t}$ & The vector of edge-wise travel times \\
    $T$ & The end time of operation \\
    $G_e$ & The expanded graph \\
    $k$ & The number of layers in the expanded graph \\
    $o$ and $s$ & The source and sink depot on the base layer of $G_e$ \\
    $V_e$ and $E_e$ & The set of nodes and edges in $G_e$ \\
    $E_e^a$ and $E_e^o$ & The set of AV-enabled and ordinary edges in $E_e$ \\
    $D_e$ & The set of customers in $G_e$, including duplicates \\
    $O$ & The set of depots in $G_e$, including duplicates \\
    $D^H(i)$ & The set of duplicate customers on layer $i \in [k]$ \\
    $D^V(i)$ & The set of customers duplicated from a single customer $i \in D$ \\
    $A$ & The set of dummy edges connecting layers in $G_e$ \\
    $\delta^-(i)$ and $\delta^+(i)$ & The set of outgoing and incoming neighbor nodes of $i \in V_e$ \\
    $Q$ & The set of discretized time intervals in $[0, T]$ \\
    $a_q$ and $b_q$ & The start and end time of interval $q \in Q$ \\
    $B$ & The budget of remote controllers \\
    $S$ & The Cartesian product of sets $M^a$ and $E_e^o$ \\ \hline
    \end{tabular}
    \caption{Notations}
    \label{tab:notations}
\end{table}

Figure \ref{fig:expanded_graph} visualizes an example of a $3$-layer expanded graph of a small undirected road network. Two upper layers in dashed lines are duplicated from the base layer in solid lines. The dots in sky blue are depots in $O$. The dots in light purple are customers in $D_e$. The black dots are road intersections in $V_e \setminus (O \cup D_e)$. The bold dashed arrows are dummy edges in $A$.

\begin{figure}[!htbp]
    \centering
    \captionsetup{justification=centering}
    \begin{subfigure}[t]{0.49\textwidth}
        \centering
        \includegraphics[width=0.8\linewidth]{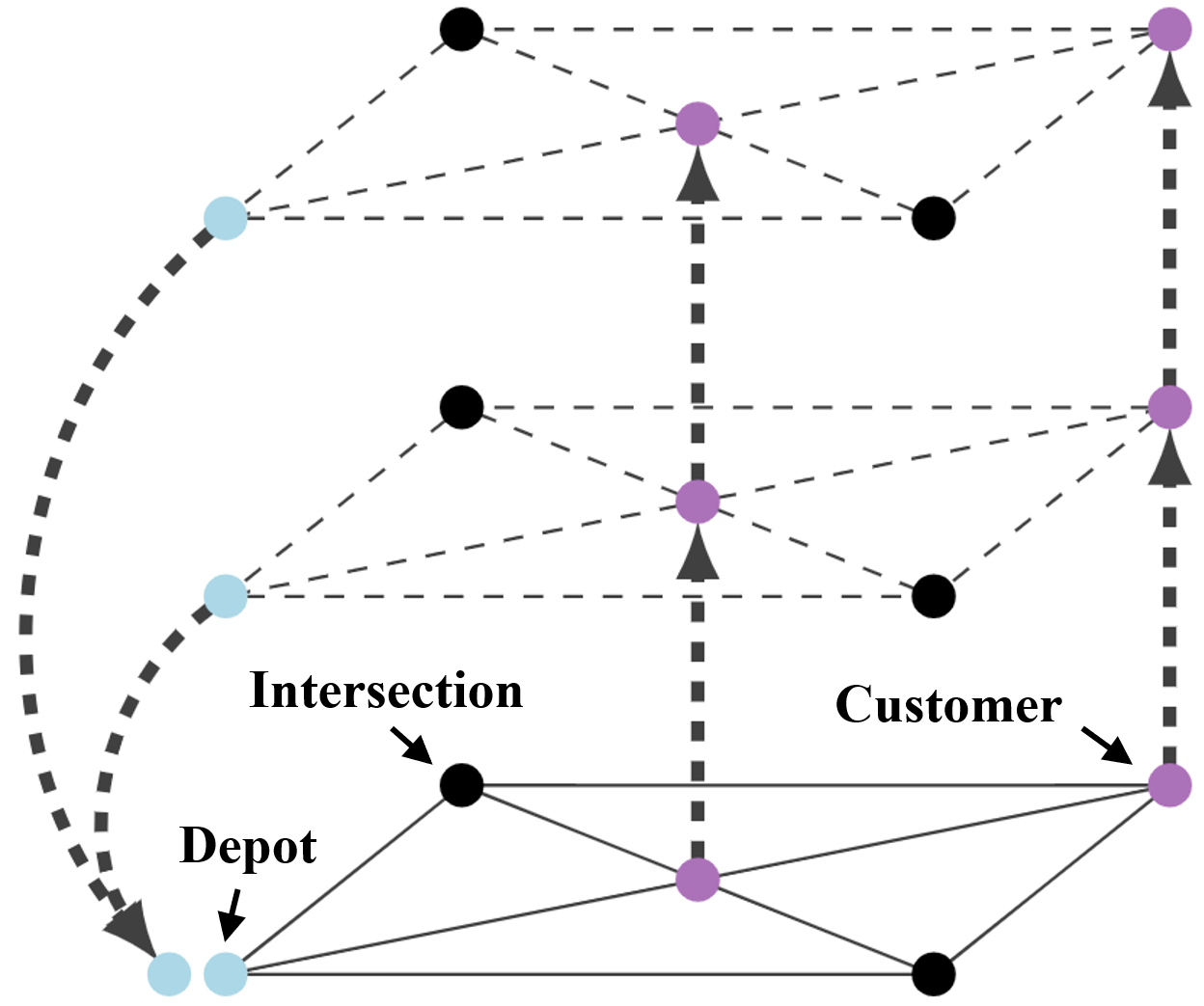}
        \caption{The 3-layer expanded graph of a tiny undirected road network with $2$ customers}
        \label{fig:expanded_graph}
    \end{subfigure}
    \begin{subfigure}[t]{0.49\textwidth}
        \centering
        \includegraphics[width=0.8\linewidth]{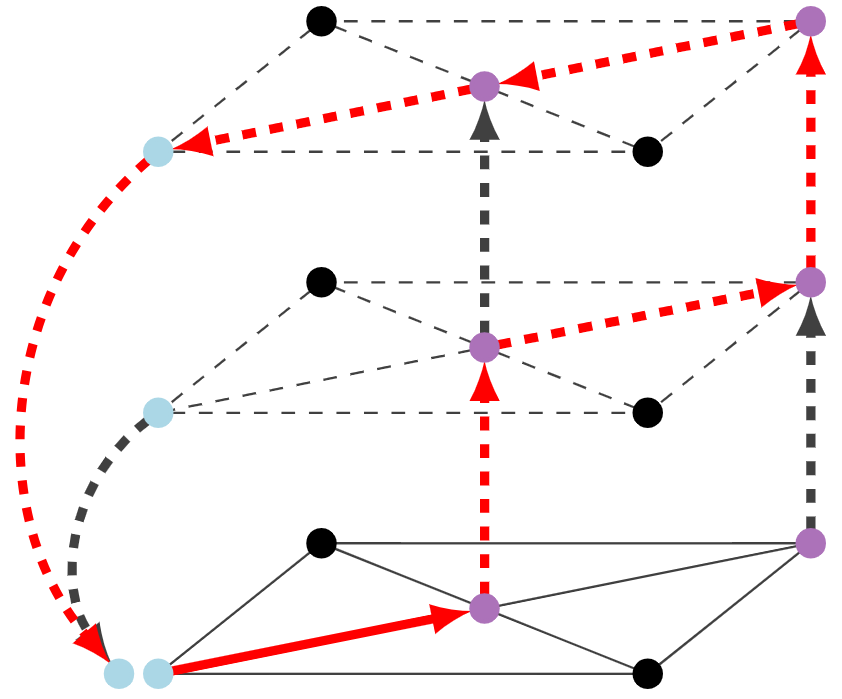}
        \caption{An optimal route highlighted in red for the expanded graph in Figure \ref{fig:expanded_graph}}
        \label{fig:optimal_route}
    \end{subfigure}
    \caption{Visualization of a $3$-layer expanded graph}
    \label{fig:expanded_graph_overview}
\end{figure}

The necessity of constructing the $k$-layer expanded graph arises from two key considerations: (1) each node or edge in the network, except the depot, can be visited up to $k$ times by a vehicle, and (2) we have to track the timestamps whenever a vehicle transitions between ordinary roads and AV-enabled roads so that we can count the total number of occupied remote controllers at any given moment. The expanded graph provides an additional dimension to distinguish between different times a vehicle visits the same node or traverses the same edge. It is important to note that this expanded graph is not \textit{strongly connected}, as nodes on lower layers are not reachable from nodes on upper layers. Thus, if we can introduce a properly designed constraint in the MILP that forces a vehicle to move up one layer via an dummy edge every time it serves a customer, each node in $V_e$ is guaranteed to be visited by the vehicle at most once in the optimal solution to the VRP-SA. Figure \ref{fig:optimal_route} visualizes this idea using the same expanded graph from Figure \ref{fig:expanded_graph}. Assuming sufficient vehicle capacity, the optimal solution follows the minimum-cost paths to serve two customers and then returns via the same paths in reverse. The red path in the figure represents the optimal vehicle route in the expanded graph, ensuring that no node or edge is visited more than once. 

Now, we are ready to formulate an MILP for the VRP-SA. We emphasize that it is based on the $k$-layer expanded graph $G_e$ instead of the original road network $G$. To keep track of the timestamps in different vehicle routes, we adopt the three-index formulation \cite{toth2014vehicle}. Define a binary decision variable $x_{ijm}$ for every $(i, j) \in E_e$ and every $m \in M$, with $x_{ijm} = 1$ indicating vehicle $m$ traverses edge $(i, j)$ in the expanded graph $G_e$. Let $\bm{x}$ be the vector of all $x_{ijm}$. Define a binary decision variable $y_{om}$ for every $m \in M$, with $y_{om} = 1$ indicating vehicle $m$ is dispatched. Let $\bm{y}_o$ be the vector of all $y_{om}$. The objective is to minimize the total operational cost, which includes both the routing cost, $RC(\bm{x})$, and the fixed cost of vehicle dispatching, $FC(\bm{y}_o)$. 
\begin{align}
    \min\limits_{\bm{x}, \bm{y}_o} \quad RC(\bm{x}) + FC(\bm{y}_o)
\end{align}

Specifically, the routing cost in Equation \ref{eq:rc} consists of three terms to account for the cost structure described in Table \ref{tab:cost}. The fixed cost in Equation \ref{eq:fc} depends on the number of dispatched vehicles of each type (i.e. AVs and HDVs).
\begin{align}
    RC(\bm{x}) = \sum_{(i, j) \in E, \; m \in M^h} c_{ij}^0 x_{ijm} + \sum_{(i, j) \in E^a_e, \; m \in M^a} c_{ij}^1 x_{ijm} + \sum_{(i, j) \in E^o_e, \; m \in M^a} c_{ij}^2 x_{ijm}, \label{eq:rc}
\end{align}
\begin{align}
    FC(\bm{y}_o) = f^a \sum_{m \in M^a} y_{om} + f^h \sum_{m \in M^h} y_{om}. \label{eq:fc}
\end{align}

Constraints \ref{c1} and \ref{c2} reflect the law of flow conservation for vehicle dispatching. Constraint \ref{c1} states that, for each vehicle and each node except the source depot $o$ and the sink depot $s$, the amount of incoming flow must be equal to the amount of outgoing flow.  Constraint \ref{c2} states that every vehicle, if dispatched from the source, must enter the sink ultimately. The flow value is capped by one unit to ensures that a vehicle does not return to the same node or edge in the expanded graph.
\begin{align}
    \sum_{j \in \delta^-(i)} x_{ijm} &= \sum_{j \in \delta^+(i)} x_{jim}, \qquad \forall \; i \in V_e - \{o, \; s\}, \; \forall \; m \in M, \label{c1} \\
    \sum_{j \in \delta^-(o)} x_{ojm} &= \sum_{j \in \delta^+(s)} x_{jsm} \leq 1, \qquad \forall \; m \in M. \label{c2}
\end{align}

Define a binary decision variable $y_{im}$ for every $i \in D_e$ and every $m \in M$, with $y_{im} = 1$ indicating vehicle $m$ serves a duplicate of customer $i$ in the expanded graph. Together with the previously defined decision variables $\bm{y}_o$, we interpret $y_{im} = 1$ for all $i \in D_e \cup \{o\}$ as an indicator of whether a vehicle serves a required node, either a customer or the depot.

Constraints \ref{c3} and \ref{c:capacity} are typical in other VRP formulations but have been adapted in our MILP to ensure compatibility with the expanded graph. Constraint \ref{c3} states that each customer is served by exactly one vehicle in total, though it may be served on any layer in the expanded graph. Constraint \ref{c:capacity} enforces that the total demand assigned to a vehicle does not exceed its capacity. 
\begin{gather}
    \sum_{i \in D^V(j)} \sum_{m \in M} y_{im} = 1, \qquad \forall \; j \in D, \label{c3} \\
    \sum_{i \in D_e} y_{im}\, d_i \leq W, \qquad \forall \; m \in M. \label{c:capacity}
\end{gather}

Constraint \ref{c4} ensures that a vehicle can visit a customer node without necessarily serving it. This situation arises when this customer lies on a vital path for a vehicle to reach another customer. In another word, a pass-through visit is allowed (i.e., $y_{im} = 0$ but  $\sum_{j \in \delta^+(i)} x_{jim} = 1$).
\begin{align}
    \sum_{j \in \delta^+(i)} x_{jim} \geq y_{im}, \qquad \forall \; i \in D_e, \; \forall \; m \in M. \label{c4}
\end{align}

Constraint \ref{c5} is called the transition constraint that guarantees a vehicle has to move to the upper layer after serving one customer. If vehicle $m$ does not serve a customer (i.e., $y_{im} = 0$), though it might have passed it, the vehicle stays on the same layer, which means $x_{ijm} = 0$ for dummy edge $(i, j) \in A$. Otherwise, the vehicle has to move up one layer, which means $x_{ijm} = 1$. 
\begin{equation}
    x_{ijm} = y_{im}, \qquad \forall \; (i, j) \in A, \; \forall \; m \in M. \label{c5}
\end{equation}

Then, we need a valid \textit{sub-tour elimination constraint} (SEC) to prevent a feasible route from being disconnected to the depot. In the context of VRP-SA, it is natural to devise the SEC in the Miller-Tucker-Zemlin (MTZ) form \cite{miller1960integer} when continuous variables (e.g., the timestamps) are necessary for the formulation. We define a continuous decision variable $t_{im} \in [0, T]$ for every $i \in V_e$ and every $m \in M$ to represent the timestamp of vehicle $m$ visiting node $i$. In particular, $t_{om}$ represents the departure time of vehicle $m$ in real-world operation.

Constraints \ref{c6} and \ref{c7} correspond to the SEC in the MTZ form. Constraint \ref{c6} states that the timestamp of a vehicle visiting a node should be strictly larger than any timestamps of the same vehicle visiting the predecessors. It prevents identical timestamps of different nodes in the same route, which eliminates sub-tours. The existence of Constraint \ref{c7} makes the inequalities in Constraint \ref{c6} become equalities if they are associated with a vehicle route (i.e., $x_{ijm} = 1$), which ensures that all timestamps in a route are consistent with respect to the travel time. Additionally, the operational end time is set as an upper bound for $t_{sm}$ to prevent any late return of vehicles.
\begin{align}
    &t_{jm} \geq t_{im} + \Delta t_{ij} + T \; (x_{ijm} - 1), \qquad \forall \; (i, j) \in E_e, \; m \in M, \label{c6} \\ 
    &T \geq t_{sm} = t_{om} + \sum_{(i,j) \in E_e} \Delta t_{ij} x_{ijm}, \qquad \forall \; m \in M. \label{c7}
\end{align}

Up to this point, we are able to keep track of all timestamps by decision variables. The next step is to formulate the budget constraint. We discretize the time horizon $[0, T]$ into a set of disjoint but connecting intervals $[a_q, b_q], \; \forall \; q \in Q$, where $Q$ is the set of interval indices. Mathematically, $b_q = a_{q+1}$ holds for $q = 1, \cdots, |Q|-1$. In practice, the intervals are set to be short (e.g., one minute) and can be equal. Then, we restrict the total number of occupied remote controllers, which is equivalent to the total number of AVs on ordinary roads simultaneously, to be less than the budget during any time interval $q$. 

For each $q \in Q$ and each $m \in M^a$, we define a binary decision variable $u_{qm}$, where $u_{qm} = 1$ indicates that vehicle $m$ is in need of a remote controller during the $q$-th time interval. Note that we only consider AVs since HDVs are not subject to the budget constraint. Then, $u_{qm} = 1$ if vehicle $m$ is traveling along edge $(i, j) \in E^o_e$ during $[t_{im}, t_{jm}]$ and $[a_q, b_q]$ overlaps with $[t_{im}, t_{jm}]$. The overlapping relation can be described by two conditions:
\begin{enumerate}
    \item The timestamp when vehicle $m$ enters edge $(i, j)$ has to be earlier than the end of the $q$-th time interval.
    \item The timestamp when vehicle $m$ leaves edge $(i, j)$ has to be later than the start of the $q$-th time interval.
\end{enumerate}
Figure \ref{fig:relation} illustrates such an overlapping relation. Suppose $[a_{10}, b_{10}]$ is the time interval of interest. $[t_1, t_2]$ and $[t_5, t_6]$ overlap with $[a_{10}, b_{10}]$ because both of the aforementioned conditions hold. $[t_8, t_9]$ does not overlap with $[a_{10}, b_{10}]$ because $t_8 > b_{10}$.
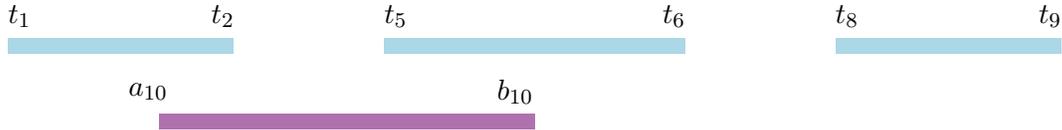
\begin{figure}[!htbp]
    \centering
    \begin{tikzpicture}

        
        \fill [vertexfill] (-7, 2.2) rectangle (-4, 2);
        \fill [vertexfill] (-2, 2.2) rectangle (2, 2);
        \fill [vertexfill] (4, 2.2) rectangle (7, 2);
        
        \fill [Orchid] (-5, 1.2) rectangle (0, 1);

        \node[text width=1cm] at (-6.5,2.5) {$t_1$};
        \node[text width=1cm] at (-3.8,2.5) {$t_2$};
        \node[text width=1cm] at (-1.5,2.5) {$t_5$};
        \node[text width=1cm] at (2.2,2.5) {$t_6$};
        \node[text width=1cm] at (4.5,2.5) {$t_8$};
        \node[text width=1cm] at (7.2,2.5) {$t_9$};
        
        \node[text width=1cm] at (-4.9,1.5) {$a_{10}$};
        \node[text width=1cm] at (0,1.5) {$b_{10}$};
    \end{tikzpicture}
    \caption{The relation of the overlapping relation between $[a_q, b_q]$ and $[t_{im}, t_{jm}]$}
    \label{fig:relation}
\end{figure}
More formally, $u_{qm} = 1$ if the following three conditions hold: (1) $x_{ijm} = 1$, (2) $a_q \leq t_{jm}$, and (3) $t_{im} \leq b_q$. Define a binary decision variable $\alpha_{qijm}$ for every time interval $q \in Q$, every edge $(i, j) \in E^o_e$, and every AV $m \in M^a$ to indicate whether $a_q \leq t_{jm}$. We have $a_q \leq t_{jm}$ if only if $\alpha_{qijm} = 1$. By analogy, we define $\beta_{qijm}$ to indicate whether $t_{im} \leq b_q$.

To capture such sufficient and necessary conditions, we can formulate Constraints \ref{c8} to \ref{c11} for every time interval $q \in Q$ and every vehicle-edge tuple $(m, \; (i,j)) \in S = M^a \times E_e^o$. 
\begin{align}
    &t_{jm} \leq a_q + T\, \alpha_{qijm}, \label{c8}\\
    &t_{jm} \geq a_q + T\, (\alpha_{qijm} - 1), \label{c9} \\
    &b_q \leq t_{im} + T\, \beta_{qijm}, \label{c10} \\
    &b_q \geq  t_{im} + T\, (\beta_{qijm} - 1). \label{c11}
\end{align}

Next, Constraint \ref{c12} is formulated to describe the sufficient condition for $u_{qm} = 1$. 
\begin{equation}
    u_{qm} \geq \frac{1}{3} (\alpha_{qijm} + \beta_{qijm} +x_{ijm}) - \frac{2}{3}, \qquad \forall \; q \in Q, \; \forall \; (m, (i,j)) \in S. \label{c12}
\end{equation}

The last but not least, we count how many remote controllers are occupied by the AV fleet during every time interval and cap it by the budget, which leads to Constraint \ref{c13}.
\begin{equation}
    \sum_{m \in M^a} u_{qm} \leq B, \qquad \forall \; q \in Q. \label{c13}
\end{equation}

The MILP is of polynomial size with respect to the size of the expanded graph and the number of discrete time intervals. The total number of decision variables and the total number of constraints are both $\mathcal{O}(k \; |S| \; |Q|) = \mathcal{O}(k \; |E_e^o| \; |M^a| \; |Q|)$. To summarize, the complete MILP is shown as follows.

\begin{mini}|s|[2]<b>
    {\bm{x}, \bm{y}, \bm{t}, \bm{\alpha}, \bm{\beta}, \bm{u}}
    {RC(\bm{x}) + FC(\bm{y}_o)}{}{}
    \addConstraint{\sum\limits_{j \in \delta^-(i)} x_{ijm}}{ = \sum\limits_{j \in \delta^+(i)} x_{jim} \leq 1, \qquad}{\forall \; i \in V_e - \{o, \; s\}, \; \forall \; m \in M}
    \addConstraint{\sum_{j \in \delta^-(o)} x_{ojm}}{= \sum\limits_{j \in \delta^+(s)} x_{jsm}, \qquad}{\forall \; m \in M}
    \addConstraint{\sum_{i \in D^V(j)} \sum\limits_{m \in M} y_{im}}{= 1, \qquad}{\forall \; j \in D}
    \addConstraint{\sum_{i \in D_e} y_{im} \, d_i}{\leq W, \qquad}{\forall \; m \in M}
    \addConstraint{\sum_{j \in \delta^+(i)} x_{jim}}{\geq y_{im}, \qquad}{\forall \; i \in D_e, \; \forall \; m \in M}
    \addConstraint{X_{ijm}}{ = y_{im}, \qquad}{\forall \; (i, j) \in A, \; \forall \; m \in M}
    \addConstraint{t_{jm}}{\geq t_{im} + \Delta t_{ij} + T \; (x_{ijm} - 1), \qquad}{\forall \; (i, j) \in E_e, \; \forall \; m \in M}
    \addConstraint{T}{\geq t_{sm} = t_{om} + \sum_{(i,j) \in E_e} \Delta t_{ij} x_{ijm}, \qquad}{\forall \; m \in M} 
    \addConstraint{t_{jm}}{\leq a_q + T\, \alpha_{qijm}, \qquad}{\forall \; q \in Q, \; \forall \; (m, (i,j)) \in S}
    \addConstraint{t_{jm}}{\geq a_q + T\, (\alpha_{qijm} - 1), \qquad}{\forall \; q \in Q, \; \forall \; (m, (i,j)) \in S}
    \addConstraint{b_q }{\leq t_{im} + T\, \beta_{qijm}, \qquad}{\forall \; q \in Q, \; \forall \; (m, (i,j)) \in S}
    \addConstraint{b_q }{\geq t_{im} + T\, (\beta_{qijm} - 1), \qquad}{\forall \; q \in Q, \; \forall \; (m, (i,j)) \in S}
    \addConstraint{u_{qm}}{\geq \frac{1}{3} (\alpha_{qijm} + \beta_{qijm} +x_{ijm}) - \frac{2}{3}, \qquad}{\forall \; q \in Q, \; \forall \; (m, (i,j)) \in S}
    \addConstraint{\sum_{m \in M^a} u_{qm}}{\leq B, \qquad}{\forall \; q \in Q}.
\end{mini}

\section{Extended Studies of the MILP} \label{sec:milp_extend}
The MILP proposed in Section \ref{sec:milp}, while exact and polynomial in size, faces limitations when applied to real-world VRP-SA instances. One challenge arises from the discretization of the operational time horizon, which creates a difficult trade-off between model size and performance. Fine granularity leads to a substantial increase in decision variables and constraints, whereas coarse granularity results in holding remote controllers longer than necessary, thereby reducing system capacity. Furthermore, as the system scales, the MILP may quickly become computationally intractable as it is much more complex than typical VRP formulations. In this section, we further explore the MILP formulation from various perspectives, aiming to enhance the model's comprehensiveness and adaptability. The analysis covered will guide the design of a two-phase tractable algorithm discussed in Section \ref{sec:algo}.

\subsection{The Budget Constraint from the Resource Allocation Perspective} \label{resource_allocation}
In the MILP, the time horizon of operation is discretized such that the budget constraint can be enforced at any given moment. In practice, however, the benefit of deploying the AV fleet deteriorates as the granularity of discretization increases, because a time interval may have a long non-overlapping sub-interval (e.g., $[t_2, t_5]$ in Figure \ref{fig:relation}) during which an AV loses the chance to utilize the remote controllers that are indeed available. Therefore, each time interval needs to be short enough. For example, assume the length of time intervals is set to be 30 seconds given that it is a reasonable time for an AV to switch from the automated mode to the remotely-controlled mode, the total number of time intervals in the MILP is as large as 1440 when we have a 12-hour operational window (i.e., $T = 12h$). According to Constraints \ref{c8} to \ref{c13}, the number of variables and constraints in the MILP is multiplied by at least $10^3$ to guarantee a relatively precise time discretization. In this section, we formulate an alternative set of constraints from the perspective of \textit{resource allocation} to meet the real-time budget without time discretization.

We override some notations specified in Table \ref{tab:notations} for a concise representation. Denote by $B$ the set of remote controllers to be allocated. For example, $B$ can be a quad of remote engineers to assist the AV fleet when AVs are on ordinary roads. Let $w = (w^1, w^2) \in E_e^o$ be an AV-enabled edge in the expanded graph, with $w^1$ and $w^2$ representing two end nodes. Define a binary decision variable $u_{bmw}$ for every remote controller $b \in B$, every AV $m \in M^a$, and every edge $w = (i, j) \in E_e^o$. If $u_{bmw} = 1$, the $b$-th unit is assigned to vehicle $m$ while it is on edge $w$. The budget is implicitly satisfied as we only have $|B|$ remote controllers regardless of the assignment. Then, the key idea is to ensure a remote controller $b$ is not assigned to two vehicles simultaneously. More formally, for every two distinct vehicle-edge tuples $(m_1, w_1)$ and $(m_2, w_2)$ in the set $M^a \times E_e^o$ such that $m_1 \neq m_2$, we have $u_{bm_1w_1} + u_{bm_2w_2} \leq 1$ if the following conditions hold:
\begin{enumerate}
    \item Edge $w_1 = (w_1^1, w_1^2)$ is in the route of vehicle $m_1$, which means  $x_{m_1 w_1} = 1$.
    \item Edge $w_2 = (w_2^1, w_2^2)$ is in the route of vehicle $m_2$, which means $x_{m_2 w_2} = 1$.
    \item The time interval of vehicle $m_1$ traversing edge $w_1$ overlaps with the time interval of vehicle $m_2$ traversing edge $w_2$. 
\end{enumerate}
To describe condition 3 rigorously, we further define a binary decision variable $\alpha (w_1, w_2, m_1, m_2)$ for every pair of vehicle-edge tuples in the set $S^2 = \{ ((m_1, w_1), (m_2, w_2)) \; | \; m_1 \neq m_2, \; m_1, m_2 \in M^a, \; w_1, w_2 \in E^o_e\}$. If and only if $\alpha (w_1, w_2, m_1, m_2) = 1$,  the timestamp of vehicle $m_1$ entering edge $w_1$ is earlier than or equal to the timestamp of vehicle $m_2$ exiting edge $w_2$, which means $t_{w_1^1 m_1} \leq t_{w_2^2 m_2}$. By analogy, we define $\beta (w_1, w_2, m_1, m_2)$ to indicate whether the timestamp of vehicle $m_1$ exiting edge $w_1$ is later than or equal to the timestamp of vehicle $m_2$ entering edge $w_2$, namely $t_{w_2^1 m_2} \leq t_{w_1^2 m_1}$. To capture such sufficient and necessary conditions, we can formulate Constraints \ref{c14} to \ref{c17} for every element in $S^2$.
\begin{align}
    &t_{w_2^2 m_2} \leq t_{w_1^1 m_1}+ T\, \alpha (w_1, w_2, m_1, m_2), \label{c14} \\
    &t_{w_2^2 m_2} \geq t_{w_1^1 m_1} + T\, (\alpha (w_1, w_2, m_1, m_2) - 1), \label{c15} \\
    &t_{w_1^2 m_1} \leq t_{w_2^1 m_2} + T\, \beta(w_1, w_2, m_1, m_2), \label{c16} \\
    &t_{w_1^2 m_1} \geq  t_{w_2^1 m_2} + T\, (\beta (w_1, w_2, m_1, m_2) - 1). \label{c17}
\end{align}

Next, for every $b \in B$ and every element in $S^2$, we can formulate Constraint \ref{c18} to describe the sufficient condition for $u_{bm_1w_1} + u_{bm_2w_2} \leq 1$. Finally, the set of Constraints \ref{c14} to \ref{c18} is a replacement for Constraints \ref{c8} to \ref{c13} for a complete MILP. 
\begin{equation}
    u_{bm_1w_1} + u_{bm_2w_2} \leq \frac{11}{4} - \frac{1}{4} (x_{m_1 w_1} + x_{m_2 w_2} + \alpha (w_1, w_2, m_1, m_2) + \beta(w_1, w_2, m_1, m_2)). \label{c18}
\end{equation}

Discarding time discretization is not always beneficial from the complexity point of view. The number of decision variables and the number of constraints in the new MILP are $\mathcal{O}(k \; |S^2| \; |B|)$, which may be more complex than that of the first MILP when the input is a huge road network because $|S^2|$ is $\mathcal{O}(|S|^2)$. In practice, however, we cannot conclude which formulation is superior to the other unless a specific instance is given. 

\subsection{Size Reduction of the MILP} \label{size_reduction}
The MILP proposed in Section \ref{sec:milp} has three folds of extra complexity compared to a basic VRP formulation: (1) The vehicle routing is modeled in a $k$-layer expanded graph. (2) Each layer is a real-world road network instead of the metric closure constructed on the set of customers. (3) A set of constraints, similar to those found in job scheduling problems, is added to the routing component in order to meet the budget of available remote controllers for real-time AV operations. Hence, in this section, we discuss two pre-processing heuristics to reduce the size of the MILP while keeping the sub-optimality gap small in general.

\subsubsection*{Heuristic 1: Reducing the Number of Layers in the Expanded Graph}
In Proposition \ref{prop: k}, we concluded that the number of layers in the expanded graph has to be at least $k$ to guarantee the optimality of the MILP, where $k$ is the maximum number of customers a vehicle can potentially serve under any circumstances and the upper bound of $k$ is the total number of customers in the instance. However, in practice, most urban networks follow a grid-like structure and consist predominantly of two-way roads, which is very different than the pathological example used to prove Lemma \ref{lemma:2}. For more details, refer to Figure \ref{fig:example_network} in Appendix \ref{app: A}. Moreover, the customers and the depot are spatially distributed more uniformly, resulting in less intersecting minimum-cost paths between them. In many of these real-world instances, the chance of a vehicle visiting a single intersection very frequently is significantly reduced. In other words, an optimal vehicle route may pass a single intersection significantly fewer times than $k$. Example \ref{real} illustrates such a realistic instance.
\begin{example} \label{real}
An undirected road network is shown in Figure \ref{fig:real_exp1}. Every node represents an intersection and every edge represents a road segment. The sky-blue node represents the depot and the light-purple nodes represent customers. The routing cost on an edge is proportional to its length. Suppose only one vehicle with unlimited capacity is available in the system. The optimal route can be obtained by observation, which is highlighted in Figure \ref{fig:real_exp2}. Notice that there are 6 customers but only node 2 and node 8 are visited twice. 
\end{example}
\begin{figure}[!htbp]
    \centering
    \begin{subfigure}[b]{0.45\linewidth}
    \centering
    \begin{tikzpicture}

    \Vertex[x = 0, y = 0, label = 0]{0}
    \Vertex[x = -1.5, y = 0, RGB, color = {190,174,212}, label = 1]{l1}
    \Vertex[x = -1.5, y = 1.5, color = white, label = 2]{l2}
    \Vertex[x = -1.5, y = 3, RGB, color = {190,174,212}, label = 3]{l3}
    \Vertex[x = -3, y = 3, RGB, color = {190,174,212}, label = 4]{l4}
    \Vertex[x = -3, y = 1.5, RGB, color = {190,174,212}, label = 5]{l5}
    \Vertex[x = -3, y = -1.5, color = white, label = 6]{l6}

    \Vertex[x = 1.5, y = 0, RGB, color = {190,174,212}, label = 7]{r1}
    \Vertex[x = 1.5, y = 1.5, color = white, label = 8]{r2}
    \Vertex[x = 1.5, y = 3, RGB, color = {190,174,212}, label = 9]{r3}
    \Vertex[x = 3, y = 3, RGB, color = {190,174,212}, label = 10]{r4}
    \Vertex[x = 3, y = 1.5, RGB, color = {190,174,212}, label = 11]{r5}
    \Vertex[x = 3, y = -1.5, color = white, label = 12]{r6}

    \Edge(0)(l1)
    \Edge(l1)(l2)
    \Edge(l2)(l3)
    \Edge(l3)(l4)
    \Edge(l4)(l5)
    \Edge(l5)(l2)
    \Edge(l5)(l6)

    \Edge(0)(r1)
    \Edge(r1)(r2)
    \Edge(r2)(r3)
    \Edge(r3)(r4)
    \Edge(r4)(r5)
    \Edge(r5)(r2)
    \Edge(r5)(r6)

    \Edge(l2)(r2)
    \Edge(l6)(r6)
    
    \end{tikzpicture}
    \caption{The road network}
    \label{fig:real_exp1}
    \end{subfigure}
    \hfill
    \begin{subfigure}[b]{0.45\linewidth}
    \centering
    \begin{tikzpicture}

    \Vertex[x = 0, y = 0, label = 0]{0}

    \Vertex[x = -1.5, y = 0, RGB, color = {190,174,212}, label = 1]{l1}
    \Vertex[x = -1.5, y = 1.5, color = white, opacity = 0.8, label = 2]{l2}
    \Vertex[x = -1.5, y = 3, RGB, color = {190,174,212}, label = 3]{l3}
    \Vertex[x = -3, y = 3, RGB, color = {190,174,212}, label = 4]{l4}
    \Vertex[x = -3, y = 1.5, RGB, color = {190,174,212}, label = 5]{l5}
    \Vertex[x = -3, y = -1.5, color = white, label = 6]{l6}

    \Vertex[x = 1.5, y = 0, RGB, color = {190,174,212}, label = 7]{r1}
    \Vertex[x = 1.5, y = 1.5, color = white, opacity = 0.8, label = 8]{r2}
    \Vertex[x = 1.5, y = 3, RGB, color = {190,174,212}, label = 9]{r3}
    \Vertex[x = 3, y = 3, RGB, color = {190,174,212}, label = 10]{r4}
    \Vertex[x = 3, y = 1.5, RGB, color = {190,174,212}, label = 11]{r5}
    \Vertex[x = 3, y = -1.5, color = white, label = 12]{r6}

    \Edge[Direct, color = red](0)(l1)
    \Edge[Direct, color = red](l1)(l3)
    \Edge[Direct, color = red](l3)(l4)
    \Edge[Direct, color = red](l4)(l5)
    \Edge(l5)(l6)

    \Edge[Direct, color = red](r1)(0)
    \Edge[Direct, color = red](r3)(r1)
    \Edge[Direct, color = red](r4)(r3)
    \Edge[Direct, color = red](r5)(r4)
    \Edge(r5)(r6)

    \Edge[Direct, color = red](l5)(r5)
    \Edge(l6)(r6)
    
    \end{tikzpicture}
    \caption{The optimal route}
    \label{fig:real_exp2}
    \end{subfigure}
    \caption{A realistic VRP instance in the real-world road network}
\end{figure}


Hence, it is justified to reduce the number of layers in the expanded graph from $k$ to a smaller integer $\overline{k}$. Nevertheless, a naive reduction may cause a highly sub-optimal solution or even make the MILP infeasible because the transition Constraint \ref{c5} forces every vehicle to move one layer up every time it serves a customer except that it is already on the topmost layer. When $\overline{k} << k$, any vehicle tends to move to the topmost layer as soon as possible even though the route does not intersect with itself in serving the first few customers. In Example \ref{real}, a 3-layer expanded graph is sufficient as no node is visited more than twice in the optimal solution shown in Figure \ref{fig:real_exp2}, but the existence of Constraint \ref{c5} excludes this true optimal solution from the feasible space, resulting in a sub-optimal solution that re-routes to edge $(5, 6)$ instead of edge $(5, 2)$.

To circumvent this disadvantage, we re-design the transition constraint by replacing Constraint \ref{c5} with Constraint \ref{transition}. Due to the inequality, a transition of layers after serving a customer is no longer compulsory but optional. The transition only occurs if it leads to a better solution. Despite the improvement by this soft transition constraint, an ``optimal'' solution given by the updated MILP can still deviate from a true optimum of the problem if $\overline{k}$ is too small for a given instance. One strategy is to solve the MILP with $\overline{k} = 1$ and iteratively increase $\overline{k}$ until the optimal value converges or the reduced cost is less than a small threshold.  
\begin{align}
    x_{ijm} \leq y_{im}, \qquad \forall \; (i, j) \in A, \; \forall \; m \in M. \label{transition}
\end{align}

\subsubsection*{Heuristic 2: Pre-Prune the Road Network}
The network pre-pruning heuristic is motivated by Proposition \ref{pruning} presented in the work by Rodriguez-Pereira et al. \cite{RODRIGUEZPEREIRA2019615}. Recall that the VRP-SA generalizes from the STSP, where both the depot and customers are regarded as required nodes. Intuitively, if an edge is part of an optimal solution to a VRP-SA instance, it is highly probable to appear in a minimum-cost path between customers or in a path that utilizes remote controllers for the least amount of time. Therefore, it is justifiable to pre-prune the input road network before we construct the expanded graph.

The pre-processing steps are as follows: (1) Compute the minimum-cost paths between any pair of nodes in the set of customers and the depot. Denote by $P_1$ and $V_1$ the set of all edges and nodes used in those paths, respectively. (2) Compute the paths for any pair of nodes in the set of customers and the depot such that each path utilizes remote controllers for the shortest duration. Denote by $P_2$ and $V_2$ the set of edges and nodes used in those paths, respectively. (3) Identify all other AV-enabled strongly connected components that are connected to $P_1$ and $P_2$. Denote by $P_3$ and $V_3$ the set of all edges and nodes used in such components, respectively. (4) Remove all edges that are not in $P_1 \cup P_2 \cup P_3$ and all nodes that are not in $V_1 \cup V_2 \cup V_3$. The resulting sparse subgraph $G_s = (V_1 \cup V_2 \cup V_3, \; P_1 \cup P_2 \cup P_3)$ replaces the original network $G$ and is used to construct the expanded graph $G_e$. 
\begin{proposition} \label{pruning}
In any optimal solution to a given Steiner traveling salesmen problem instance, all edges used belong to some minimum-cost paths in the network connecting two required nodes. 
\end{proposition}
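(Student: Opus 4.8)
The plan is to prove the statement by a standard local-exchange (replacement) argument applied to an optimal closed walk. First I would fix an optimal solution $W^{\ast}$, viewed as a closed walk that starts and ends at the depot and visits every required node at least once, and decompose it into \emph{segments}. Reading $W^{\ast}$ from the depot, list the occurrences of required nodes in the order they are encountered, say $r_{1}=\text{depot},\, r_{2},\, \dots,\, r_{p}$, and then back to $r_{1}$ (repetitions are allowed if a required node is visited several times); let $\sigma_{i}$ denote the portion of $W^{\ast}$ strictly between the occurrence of $r_{i}$ and the next occurrence $r_{i+1}$, with indices taken modulo $p$. By construction the interior of each $\sigma_{i}$ contains no required node, and every edge-use in $W^{\ast}$ lies in exactly one segment. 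Hence it suffices to show that each $\sigma_{i}$ is a minimum-cost path in $G$ between the two required nodes $r_{i}$ and $r_{i+1}$.

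Next I would establish the key claim: in an optimal solution every segment $\sigma_{i}$ has cost exactly $\mathrm{dist}(r_{i},r_{i+1})$, the minimum cost of a path connecting $r_{i}$ and $r_{i+1}$ in $G$ (well defined since $G$ is connected with positive edge costs). Suppose instead $\mathrm{cost}(\sigma_{i}) > \mathrm{dist}(r_{i},r_{i+1})$ for some $i$, and let $\pi_{i}$ be a minimum-cost $r_{i}$--$r_{i+1}$ path. Replacing the occurrence of $\sigma_{i}$ inside $W^{\ast}$ by $\pi_{i}$ yields again a closed walk through the depot; it still visits every required node, since all required-node occurrences in the other segments are untouched and $r_{i},r_{i+1}$ remain the endpoints of the spliced-in piece. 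Its total cost is strictly smaller than that of $W^{\ast}$, contradicting optimality. (If $r_{i}=r_{i+1}$, the same argument with the empty path shows $\sigma_{i}$ must be empty in an optimal solution, so this degenerate case contributes no edges.)

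It then remains to upgrade ``$\sigma_{i}$ is a walk of cost $\mathrm{dist}(r_{i},r_{i+1})$'' to ``$\sigma_{i}$ is a minimum-cost path.'' This is exactly where I would invoke the hypothesis that all edge costs are strictly positive: if $\sigma_{i}$ revisited some node $w$, it would contain a closed sub-walk at $w$ traversing at least one edge, hence of positive cost; deleting that sub-walk would produce an $r_{i}$--$r_{i+1}$ walk of cost strictly below $\mathrm{dist}(r_{i},r_{i+1})$, which is impossible. So $\sigma_{i}$ is a simple path of cost $\mathrm{dist}(r_{i},r_{i+1})$, i.e., a minimum-cost path between two required nodes. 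Combining the two claims, every edge used by $W^{\ast}$ lies on a minimum-cost path connecting two required nodes, as desired.

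I do not expect a serious obstacle here; the argument is a routine exchange argument, and the only point requiring a little care is the final step of excluding non-simple segments, which genuinely needs the positivity of the edge costs — a zero-cost detour could otherwise be absorbed into a ``shortest'' walk and the statement would fail. I would also remark in passing that an optimal solution exists because, by the same reasoning, one may restrict attention to walks whose segments are simple paths, of which there are only finitely many.
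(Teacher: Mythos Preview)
Your argument is correct and is the standard exchange argument for this fact: decompose the optimal closed walk at consecutive required-node occurrences, observe that each segment must realize the shortest-path distance between its endpoints (else swap in a shortest path and strictly improve), and use positivity of the edge costs to conclude each segment is simple. There is nothing to fault here.

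As for comparison: the paper does not actually prove this proposition. It is quoted from Rodr\'iguez-Pereira et al.\ \cite{RODRIGUEZPEREIRA2019615} and used only to motivate the pre-pruning heuristic, so there is no in-paper proof to set yours against. Your write-up is in fact more careful than what one typically sees, since you make explicit both the case $r_i = r_{i+1}$ and the role of strict positivity in ruling out non-simple segments.
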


\section{A Two-Phase Tractable Algorithm} \label{sec:algo}
As discussed in Section \ref{size_reduction}, the size complexity of the original MILP renders it computationally intractable for any practical instances in the real world. Even with the application of the aforementioned size reduction heuristics in Section \ref{size_reduction}, directly solving the VRP-SA using the most powerful MILP solver remains challenging. Therefore, in this section, we develop a two-phase algorithm to efficiently find high-quality solutions to medium-size VRP-SA instances with up to 100 customers and 2000 edges in the road network. 

In phase 1, given any instance $\mathcal{I}_0$ of the VRP-SA, we create an instance $\mathcal{I}_1$ of the \textit{Heterogeneous Vehicle Routing Problem with Fixed Costs and Vehicle-Dependent Routing Cost (H-VRP-FD)} by removing the budget constraint of remote controllers. Naturally, the optimal value of $\mathcal{I}_1$ is a lower bound for that of $\mathcal{I}_0$. Then, we solve $\mathcal{I}_1$ efficiently to obtain a near-optimal solution. The exact optimum can be also pursued if the instance size permits. Currently, the state-of-the-art exact and heuristic approaches for this VRP variant are due to Baldacci and Mingozzi \cite{baldacci2009unified} and Vidal et al. \cite{VIDAL2014658}, respectively.

By decoding the solution to $\mathcal{I}_1$, we can derive all routes of vehicles dispatched. Denote by $\Tilde{X}$ the set of routes of the HDV fleet, and by $X$ the set of routes of the AV fleet. In particular, $\Tilde{X} = \{X(m) \; | \; \forall \; m \in \overline{M}^h\}$ and $X = \{X(m) \; | \; \forall \; m \in \overline{M}^a\}$, where $X(m)$ is the \textit{route} of vehicle $m$ represented as a sequence of road intersections visited, and $\overline{M}^h$ ($\overline{M}^a$) is the set of all dispatched (HDVs) AVs. Each $X(m)$ can be used to infer the \textit{routing schedule} $\mathcal{T}(m)$, which contains the timestamps of the vehicle visiting intersections on the route. All vehicle departure times are set to be zero. Denote by $\Tilde{\mathcal{T}} = \{\mathcal{T}(m) \; | \; m \in \overline{M}^h\}$ the set of routing schedules of dispatched HDVs, and by $\mathcal{T} = \{\mathcal{T}(m) \; | \; m \in \overline{M}^a\}$ the set of routing schedules of dispatched AVs. 

In phase 2, we check whether the set of routing schedules $\mathcal{T}$ is feasible with respect to the budget of remote controllers, and recover the solution feasibility if necessary. Recall that $\Tilde{\mathcal{T}}$ does not impact the feasibility of the solution to $\mathcal{I}_0$ since HDVs do not require assistance from remote controllers. However, the schedules $\mathcal{T}$ for AVs might be infeasible. For any two consecutive timestamps, $t_{im}$ and $t_{jm}$, in the routing schedule $\mathcal{T}(m)$, we can determine whether AV $m$ occupies a remote controller depending on the type of edge $(i, j)$. This enables us to efficiently verify whether $(X, \; \mathcal{T})$ violates the budget constraint over certain time intervals. If the budget is not exceeded, then $(\Tilde{X} \cup X, \; \Tilde{\mathcal{T}} \cup \mathcal{T})$ is a feasible and therefore an optimal solution to $\mathcal{I}_0$. On the contrary, if the budget is exceeded, we solve a \textit{feasibility recovering sub-problem (FRP)} to find a near-optimal solution $(\Tilde{X} \cup X^*, \;  \Tilde{\mathcal{T}} \cup \mathcal{T}^*
)$ to $\mathcal{I}_0$, where $(X^*, \; \mathcal{T}^*)$ are the updated AV routes and their schedules. Two families of FRP based on \textit{re-scheduling} strategy and \textit{re-routing} strategy will be discussed in Section \ref{sec:frp1} and Section \ref{sec:frp2}, respectively.

The two-phase algorithmic pipeline is detailed in Algorithm \ref{algo:holistic}, with the sub-routines in Line 3 and Line 6 elaborated later. It is important to note that neither the re-scheduling FRP nor re-routing FRP guarantees a complete recovery of the solution feasibility for $(\mathcal{X}, \mathcal{T})$. In the worst case, it may be necessary to replace some AVs with HDVs to serve certain customers and avoid exceeding the budget. If no additional HDVs are available in the system, the instance will be deemed infeasible.
\begin{algorithm}[!htbp]
\caption{The Two-Phase Algorithmic Pipeline to Solve the VRP-SA}

\begin{algorithmic}[1]
\Require A VRP-SA instance $\mathcal{I}_0$
\Ensure Feasible vehicle routes are schedules $(\Tilde{X} \cup X^*, \; \Tilde{\mathcal{T}} \cup \mathcal{T}^*)$

\State Construct an instance $\mathcal{I}_1$ of the H-VRP-FD based on $\mathcal{I}_0$, 
\State $(\Tilde{X}, \; X, \; \Tilde{\mathcal{T}}, \; \mathcal{T}) \gets \texttt{HVRPPDSolver} (\mathcal{I}_1)$ 
\State $\mathcal{T}^* \gets \texttt{ReSchedulingFRPSolver}(\mathcal{T})$
\State $X^* \gets X$
\If{$\mathcal{T}^* = \varnothing$}
\State $(\overline{D}, \; X^*, \; \mathcal{T}^*) \gets \texttt{ReRoutingFRPSolver}(X, \; \mathcal{T})$
\If{ $\overline{D} \neq \varnothing$}
\State $\Tilde{X}_{\overline{D}} \gets \texttt{CVRPSolver}(\overline{D})$
\State $\Tilde{X} \gets \Tilde{X} \cup \Tilde{X}_{\overline{D}}$
\EndIf
\EndIf
\end{algorithmic}
 \label{algo:holistic}
\end{algorithm}

Theoretically, Algorithm \ref{algo:holistic} may still fail to return a feasible solution even though $\mathcal{I}_0$ is indeed feasible. This limitation stems from the inherent nature of the relax-then-recover framework, which decomposes customer assignment and sequencing without incorporating a mechanism to iteratively explore the entire search space of the VRP-SA. Consequently, it may miss feasible solutions that require more nuanced exploration. However, we hypothesize that, under realistic conditions in the real world, a VRP-SA instance is very likely to be infeasible if Algorithm \ref{algo:holistic} fails to find a feasible solution.

\subsection{The Re-Scheduling Feasibility Recovering Sub-Problem} \label{sec:frp1}
One strategy to recover the feasibility of $(X, \; \mathcal{T})$ is to re-schedule $\mathcal{T}$ while keeping the set of routes $X$ unchanged. For each routing schedule $\mathcal{T}(m)$, we can extract a subset of timestamps called \textit{transition timestamps}, at which an AV switches between an AV-enabled road and an ordinary road. All road segments of the same type traveled between any two consecutive transition timestamps can be merged into a \textit{sub-route}. We observe that, the departure time is not necessarily zero for every vehicle in the optimal solution to the VRP-SA, as the departure delay of some vehicles may benefit the system by adding more feasibility to the temporal space of the problem. In another word, the routing schedule of every AV, as a whole, can potentially be shifted forward in time to avoid budget violations, provided the vehicle’s return time is earlier than the end of the operation horizon $T$.

Figure \ref{fig:frp1} illustrates this re-scheduling process for a small instance with two AVs and a single remote controller. In the figure, each strip represents a time interval during which an AV occupies a remote controller on an ordinary sub-route. The original schedules are infeasible due to overlapping strips between vehicles $m_1$ and $m_2$. If we delay the departure of vehicle $m_2$ by $\delta$, highlighted in purple strips, the schedules become feasible. This rescheduling does not increase the overall cost, as a departure delay does not inherently affect the operation negatively. In general, the \textit{re-scheduling FRP} takes $\mathcal{T}$ as the input and aims to find a feasible shift of the routing schedule $\mathcal{T}(m)$ for every dispatched vehicle $m$, such that the updated schedules $\mathcal{T}^*$ are feasible to $\mathcal{I}_0$.


\begin{figure}[!htbp]
    \centering
    \begin{tikzpicture}

        \draw[thick] (-7, 3) -- (-7, 0.5);
        \draw[thick] (-7, -0.5) -- (-7, -3);
        \draw[thick] (8, 3) -- (8, 0.5);
        \draw[thick] (8, -0.5) -- (8, -3);

        \draw[dashed, thick] (-6, 1) -- (-6, -2);
        \draw[dashed, thick] (-2, 1) -- (-2, -2);
        \draw[dashed, thick] (2.5, 1) -- (2.5, -2);

        \draw[dashed, thick] (-6, -2) rectangle (-5, -2.5);
        \draw[dashed, thick] (-2, -2) rectangle (-1, -2.5);
        \draw[dashed, thick] (2.5, -2) rectangle (3.5, -2.5);

        \node[text width=1cm] at (-7,3.5) {$t = 0$};
        \node[text width=1cm] at (8,3.5) {$t = T$};
        \node[text width=1cm] at (-7.5, 2.25) {$m_1$};
        \node[text width=1cm] at (-7.5, 1.25) {$m_2$};
        \node[text width=1cm] at (-7.5, -1.25) {$m_1$};
        \node[text width=1cm] at (-7.5, -2.25) {$m_2$};

        \node[text width=2cm] at (0.2, 3) {Original};
        \node[text width=2cm] at (0, -0.5) {Rescheduled};

        \node[text width = 2cm] at (-5, -3) {$\underbracket{\hspace{1cm}}_{\displaystyle \delta}$};
        \node[text width = 2cm] at (-1, -3) {$\underbracket{\hspace{1cm}}_{\displaystyle \delta}$};
        \node[text width = 2cm] at (3.5, -3) {$\underbracket{\hspace{1cm}}_{\displaystyle \delta}$};

        \begin{scope}[on background layer]
        \filldraw [fill = vertexfill, draw = black] (-7, 2.5) rectangle (-5, 2);
        \filldraw [fill = vertexfill, draw = black] (-3, 2.5) rectangle (-1.5, 2);
        \filldraw [fill = vertexfill, draw = black] (0.5, 2.5) rectangle (3.5, 2);
        \filldraw [fill = vertexfill, draw = black] (6, 2.5) rectangle (7, 2);
        
        \filldraw [fill = vertexfill, draw = black] (-6, 1.5) rectangle (-4, 1);
        \filldraw [fill = vertexfill, draw = black] (-2, 1.5) rectangle (-1, 1);
        \filldraw [fill = vertexfill, draw = black] (2.5, 1.5) rectangle (4, 1);
        
        \filldraw [fill = vertexfill, draw = black] (-7, -1) rectangle (-5, -1.5);
        \filldraw [fill = vertexfill, draw = black] (-3, -1) rectangle (-1.5, -1.5);
        \filldraw [fill = vertexfill, draw = black] (0.5, -1) rectangle (3.5, -1.5);
        \filldraw [fill = vertexfill, draw = black] (6, -1) rectangle (7, -1.5);
        
        \filldraw [fill = Orchid, draw = black] (-5, -2) rectangle (-3, -2.5);
        \filldraw [fill = Orchid, draw = black] (-1, -2) rectangle (0, -2.5);
        \filldraw [fill = Orchid, draw = black] (3.5, -2) rectangle (5, -2.5);

        \end{scope}

        
    \end{tikzpicture}
    \caption{The illustration of the re-scheduling FRP with fixed travel time}
    \label{fig:frp1}
\end{figure}

The re-scheduling FRP is a feasibility problem that can be exactly modeled by an MILP without an objective function, referred to as the \textit{re-scheduling MILP}, which will be elaborated upon later. Notably, the formulation of this re-scheduling MILP resembles the constraints in Section \ref{resource_allocation} which handled the budget constraint from a resource allocation perspective.

Denote by $R = R^a \cup R^o$ the set of sub-routes, where $R^a$ is the subset of AV-enabled sub-routes and $R^o$ is the subset of ordinary sub-routes. Denote by $\Delta t_r$ the travel time in sub-route $r$. Define a continuous decision variable $\overline{t}_m \in  [0, T]$ as the re-scheduled departure time for each dispatched AV $m \in \overline{M}^a$. Furthermore, define two continuous decision variables $t^1_r \in [0, T]$ and $t^2_r \in [0, T]$ as the timestamp of the associated vehicle entering and exiting sub-route $r$, respectively. Then, the consistency among timestamps can be captured by Constraints \ref{frp1:c1} and \ref{frp1:c2}, where $r' \prec r$ means sub-route $r'$ is traversed earlier than sub-route $r$ on the same route. 
\begin{align} 
    &t_r^1 = \overline{t}_m + \sum_{r': \; r' \prec \, r} \Delta t_{r'}, \qquad \forall \; r \in R, \label{frp1:c1} \\
    &t_r^2 = t_r^1 + \Delta t_r, \qquad \forall \; r \in R. \label{frp1:c2}
\end{align}

Define a binary variable $u_{br} \in \{0, 1\}$ for every remote controller $b \in B$ and every ordinary sub-route $r \in R^o$. If $u_{br} = 1$, the $b$-th controller is assigned to the sub-route $r$. Constraint \ref{frp1:extra} regulates the unique assignment of one remote controller to a single ordinary sub-route.
\begin{gather}
    \sum_{b \in B} u_{br} = 1, \qquad \forall \; r \in R^o.
    \label{frp1:extra}
\end{gather}

Define a set $R^2 = \{(r_1, r_2) \; | \; r_1, r_2 \in R^o, \; r_1 \nparallel r_2\}$, where $r_1 \nparallel r_2$ means sub-routes $r_1$ and $r_2$ are not on the same route. For each pair of sub-routes in $R^2$, define a binary decision variable $\alpha(r_1, r_2)$. If and only if $\alpha(r_1, r_2) = 1$, the start time of sub-route $r_1$ is earlier than or equal to the end time of sub-route $r_2$, which means $t_{r_1}^1 \leq t_{r_2}^2$. By analogy, we define $\beta (r_1, r_2)$ to indicate whether the the end time of sub-route $r_1$ is later than or equal to the start time of sub-route $r_2$, namely $t_{r_2}^1 \leq t_{r_1}^2$. To capture these sufficient and necessary conditions, we formulate Constraints \ref{frp1:c3} to \ref{frp1:c6}.
\begin{align}
    &t_{r_2}^2 \leq t_{r_1}^1 + T \; \alpha(r_1, r_2), \qquad \forall \; (r_1, r_2) \in R^2,\label{frp1:c3} \\
    &t_{r_2}^2 \geq t_{r_1}^1 + T \; (\alpha(r_1, r_2) - 1),  \qquad \forall \; (r_1, r_2) \in R^2, \label{frp1:c4} \\
    &t_{r_1}^2 \leq t_{r_2}^1 + T \; \beta(r_1, r_2), \qquad \forall \; (r_1, r_2) \in R^2, \label{frp1:c5} \\
    &t_{r_1}^2 \geq t_{r_2}^1 + T \; (\beta(r_1, r_2) - 1),  \qquad \forall \; (r_1, r_2) \in R^2. \label{frp1:c6}
\end{align}

Then, we can formulate Constraint \ref{frp1:c7} to avoid a remote controller being assigned to two ordinary sub-routes simultaneously. Overall, all the constraints of the re-scheduling MILP are listed from \ref{frp1:c1} to \ref{frp1:c7}.
\begin{gather}
    u_{br_1} + u_{br_2} \leq \frac{5}{2} - \frac{1}{2} (\alpha(r_1, r_2) + \beta(r_1, r_2)), \qquad \forall \; (r_1, r_2) \in R^2. \label{frp1:c7}
\end{gather}

One may notice that the re-scheduling FRP is also an NP-hard problem, and the core constraints from \ref{frp1:c3} to \ref{frp1:c7} in the re-scheduling MILP exhibit structural similarity to the highly complex constraints discussed in Section \ref{resource_allocation}. In practice, however, the re-scheduling FRP can be solved much more efficiently as the total number of sub-routes is much less than the total number of edges in the network. It is justified because AV-enabled roads tend to form a skeleton within the network as smart infrastructure develops, leading to less frequent transitions between AV-enabled road segments and ordinary road segments along a vehicle route. 

Building on the concept of flexible departure times aforementioned, we further explore a scenario where the travel times of AVs on each road segment is also flexible. This unlocks an even larger feasible space for both the VRP-SA and the re-scheduling FRP. Detailed discussions are provided in Appendix \ref{Appendix B}.

\subsection{The Re-Routing Feasibility Recovering Sub-Problem} \label{sec:frp2}
In this section, we discuss the re-routing strategy to recover the feasibility of $(X, \mathcal{T})$. Unlike the re-scheduling FRP, the re-routing FRP takes both $X$ and $\mathcal{T}$ as the input and aims to find a feasible set of vehicle routes $X^*$ along with the updated routing schedules $\mathcal{T}^*$. Importantly, the customer-to-vehicle assignment remains unchanged, with adjustments made only within individual vehicle routes.

The re-routing FRP is an optimization problem with the objective of minimizing the additional cost incurred by route perturbation. It generalizes the re-scheduling FRP, as perturbing a route includes the special case of adjusting only the departure time (or all timestamps under the flexible travel time assumption discussed in Appendix \ref{Appendix B}). Due to its complexity, finding an exact solution is challenging. Therefore, we develop an iterative algorithm based on the \textit{ruin-and-recreate} principle and a \textit{route priority heuristic} to solve it approximately. 

First of all, we convert $(X, \mathcal{T})$ to a priority queue, where the priority of a route $X(m)$ is determined by its value $v(m)$, as defined in Equation \ref{eq:heuristic}. Two cost adjustment factors $\eta_1 < 1$ and $\eta_2 > 1$ that are defined in Section \ref{sec:description} play a key role here. Let $c^1(m)$ and $c^2(m)$ represent the total routing costs of AV-enabled roads and ordinary roads, respectively, in route $X(m)$. Routes that better utilize AV-enabled roads are more valuable compared to HDV routes. We also initialize $X^{*}$ and $\mathcal{T}^{*}$ as two empty containers to store feasible routes and their routing schedules.
\begin{equation} \label{eq:heuristic}
    v(m) = \frac{1 - \eta_1}{\eta_1} \; c^1 (m) + \frac{1 - \eta_2}{\eta_2} \; c^2(m)
\end{equation}

Then, in each iteration, the algorithm pops the route $X(m)$ with the highest value from the priority queue and examines whether its routing schedule $\mathcal{T}(m)$, when combined with any other existing routing schedules in $\mathcal{T}^{*}$, violates the budget constraint at a certain point in time. If no violation occurs, $X(m)$ and $\mathcal{T}(m)$ are moved to the feasible containers, and the algorithm proceeds to the next iteration. If a violation is detected, the current route $X(m)$ is ``ruined'', and the algorithm attempts to recreate a new route $X^*(m)$ such that the following two constraints are satisfied: (1) Vehicle $m$ avoids ordinary road segments when the budget is exhausted or exceeded given the schedules of remaining routes. (2) The return time of vehicle $m$ is less than the end time of operation. The re-creation process is essentially solving a \textit{Constrained Steiner Traveling Salesman Problem} (CSTSP). It takes as input the original network $G$, the subset of customers $D(m)$ served by the current route $X(m)$, the operational end time $T$, and the feasible routing schedules $\mathcal{T}^{*}$, and output a new feasible route $X^*(m)$ or report that no solution exists. Whenever the algorithm fails to recreate a feasible route cheaper than dispatching an HDV, it adds $D(m)$ to the set of unserved customers $\overline{D}$. 

The CSTSP, a crucial component of the algorithm, can be modeled similarly to the VRP-SA discussed in Section \ref{sec:milp}, but with substantially reduced complexity. We first apply the size reduction heuristics from Section \ref{size_reduction} and construct an expanded graph $G_e(m)$ following the steps in Section \ref{sec:milp}. The resulting graph is considerably smaller than $G_e$ for the full VRP-SA because the graph pre-pruning removes more edges given that $|D(m)| < |D|$. We then formulate a similar MILP, called \textit{re-routing MILP}, based on $G_e(m)$. The primary modifications are highlighted as follows while further details are provided in Appendix \ref{Appendix C}.
\begin{enumerate}
    \item Since the problem involves only one vehicle route, which must be dispatched, we eliminate the fixed cost component from the objective function and remove the vehicle index from all decision variables.
    \item Manual time discretization is unnecessary. Instead, we construct the set of non-overlapping time intervals by partitioning the operational time horizon $[0, T]$ based on the timestamps in the set $\{(i, j) \; | \; i, j \in \mathcal{T}' \cup \{T\}\}$, where $\mathcal{T}' = \bigcup_{\mathcal{T}(m) \in \mathcal{T}^{*}} \mathcal{T}(m)$ represents a merged set of timestamps from the current feasible routing schedules $\mathcal{T}^{*}$. 
\end{enumerate}

\begin{algorithm}[!t]
\caption{The re-routing FRP solver}
\begin{algorithmic}[1]
\Require Graph $G$, AV routes and schedules $(X, \; \mathcal{T})$, End time of operation $T$
\Ensure Updated AV routes and schedules $(X^*, \; \mathcal{T}^*)$, Unserved customers $\overline{D}$

\State $(X, \; \mathcal{T}) \gets \texttt{PriorityQueueByValues}(X, \; \mathcal{T})$
\State Initialize $(X^*, \mathcal{T}^*) = (\varnothing, \varnothing)$

\While{$(X, \mathcal{T}) \neq \varnothing$}

\State $(X(m), \mathcal{T}(m)) \gets (X, \mathcal{T}).\texttt{dequeue()}$

\If{$\mathcal{T}^{*} \cup \{\mathcal{T}(m)\}$ does not violate budget at any time}
\State $X^* \gets X^* \cup \{X(m)\}$
\State $\mathcal{T}^* \gets \mathcal{T}^* \cup \{\mathcal{T}(m)\}$
\State \textbf{continue}
\EndIf

\State $D(m) \gets \texttt{ExtractCustomers}(X(m))$
\State $G_s(m) \gets \texttt{GraphPruner}(G, D(m))$
\State $G_e(m) \gets \texttt{GraphExpander}(G_s(m))$
\State $(X^*(m), \; \mathcal{T}^*(m)) \gets \texttt{ReRoutingMILPSolver}(G_e(m), \; D(m), \; \mathcal{T}^*, \; T)$
\State $UB \gets \texttt{TSPSolver}(D(m))$
\If{$X^*(m) = \varnothing$ or $X^*(m)$ is more expansive than $UB$}
\State $\overline{D} \gets \overline{D} \cup D(m)$
\Else
\State $X^* \gets X^* \cup \{X^*(m)\}$
\State $\mathcal{T}^* \gets \mathcal{T}^* \cup \{\mathcal{T}^*(m)\}$
\EndIf
\EndWhile
\end{algorithmic}
\label{algo:re-routing-frp}

\end{algorithm}

Overall, Algorithm \ref{algo:re-routing-frp} provides a complete pseudo-code for the re-routing FRP solver. As aforementioned, the order of iteration of routes is not arbitrary but depends on the route priority heuristic. Routes with higher values are less likely to be perturbed or replaced, which guides the algorithm towards solutions with lower total routing costs. Additionally, if the algorithm terminates with a non-empty set $\overline{D}$, it indicates that the feasibility of $(X, \mathcal{T})$ has not been fully recovered. This outcome could be due to one of the following reasons: (1) the re-routing FRP is infeasible with the current customer-to-vehicle assignments, or (2) the re-routing FRP is feasible but the algorithm fails to find a feasible solution. Nevertheless, the algorithm is devised to recover the feasibility to the greatest extent possible.

\section{Numerical Experiments} \label{sec:exp}
In this section, we construct a tailored set of instances for the VRP-SA and solve them using Algorithm \ref{algo:holistic} described in Section \ref{sec:algo}. The results (1) validate the effectiveness and efficiency of the proposed two-phase algorithm and (2) illustrate how variations in input parameters influence both the problem structure and the corresponding solutions.

\subsection{VRP-SA Instances} \label{instances}
We construct 23 VRP-SA instances, each based on a CVRP instance from the benchmark dataset P described by Augerat \cite{augerat1995computational}. The number of customers in these CVRP instances ranges from 16 to 101. Each CVRP instance provides the following inputs: the coordinates of the depot and customers in a 2D Euclidean space, customer demands, and a uniform vehicle capacity. These inputs are directly adopted for the corresponding VRP-SA instances.

To accommodate the additional attributes of the VRP-SA, we construct an underlying road network for each CVRP instance based on the coordinates of all nodes. The process is as follows: (1) Draw a rectangular bounding box such that the farthest nodes are positioned along its boundary. (2) Construct a primary grid by equally dividing the bounding box along both the x- and y-axes, with two hyperparameters, $g_x$ and $g_y$, determining the number of divisions in each direction. Unless otherwise specified, we set $g_x = g_y = 5$ in the following experiments. This grid serves as the skeleton of the underlying road network, where all road segments are AV-enabled. (3) Within each cell of the primary grid, we construct a local grid, where each customer within the cell is placed precisely at an intersection. All road segments within the local grids are ordinary roads. Figure \ref{fig:network} illustrates the underlying road network of instance P-n40-k5. The design aims to mimic real-world semi-autonomous environments, where infrastructure upgrades to support AVs are prioritized on primary avenues in cities.  
\begin{figure}[!htbp]
    \centering
    \includegraphics[width=0.8\linewidth]{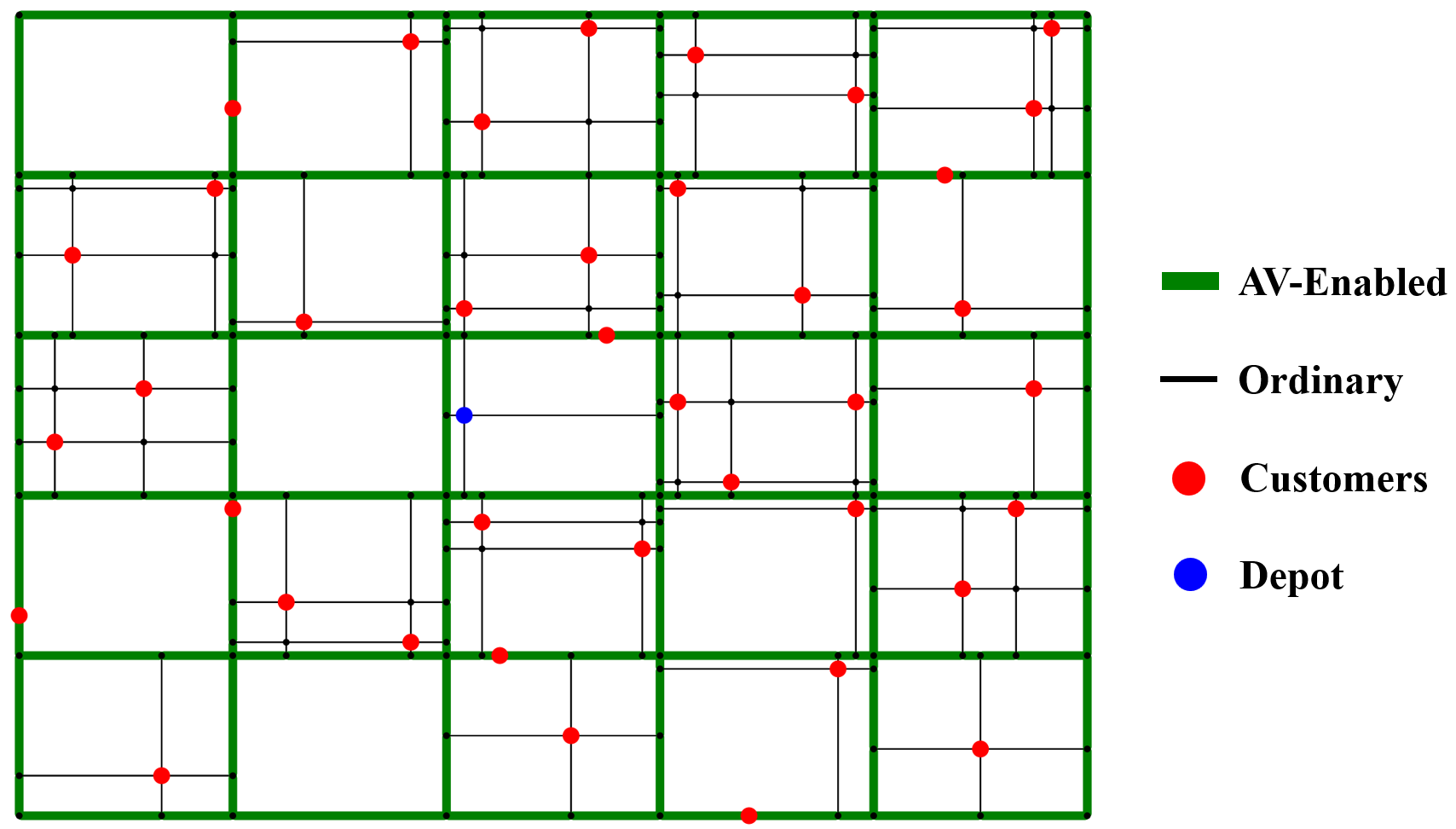}
    \caption{The underlying road network of P-n40-k5}
    \label{fig:network}
\end{figure}

As assumed in Section \ref{sec:description}, the unit routing cost of an AV on the primary grid is lower than that of an HDV. Conversely, it is more cost-demanding for an AV to travel on local grids than an HDV. For HDVs, the routing cost on any edge in the road network is equal to the edge length. For AVs, the routing cost on an edge in the primary grid is given by $\eta_1$ times the edge length, while on local grids, it is $\eta_2$ times the edge length. In the following experiments, we set $\eta_1 = 0.5$ and $\eta_2 = 1.2$ if not otherwise specified. 

To guarantee that all VRP-SA instances are feasible, which simplifies the analysis of experimental results, we set the size of the AV fleet, $|M^a|$, and the HDV fleet, $|M^h|$, in each VRP-SA instance to match the number of vehicles used in the optimal solution of the corresponding CVRP instance. To avoid over-dispatching, we assign an equal fixed dispatch cost (e.g., 1) to all vehicles.

For each VRP-SA instance, we define the operation end time as $T = T_{factor} \cdot \overline{T}$, where $\overline{T}$ is the maximum return time of all vehicles in the optimal solution to the CVRP instance with only HDVs deployed. The parameter $T_{factor} \in [1, \infty]$ serves as a time-horizon factor that controls the relative tightness of the operation time window. Note that travel time and routing cost are used interchangeably since the VRP instances in this section are synthetic. In the following experiments, we set $T_{factor} = 1.2$, unless otherwise specified.

Finally, we introduce a budget $B$ that determines the number of remote controllers available to assist AVs on ordinary local grids. In the experiments, we set $B = \max(1, \; \texttt{round}(\frac{|M^a|}{3}))$ unless otherwise specified, which roughly corresponds to one-third of the AV fleet size in each VRP-SA instance. This percentage-based approach provides more flexibility, as the minimum number of dispatched vehicles varies significantly across instances. 

\subsection{Algorithm Settings} \label{setting}
Strictly speaking, the algorithm must solve an H-VRP-PD in Phase 1, as shown in Line 2 of Algorithm \ref{algo:holistic}, before proceeding to Phase 2 for re-scheduling or re-routing. This initial step is computationally intensive. From an experimental standpoint, we simplify Phase 1 by solving a CVRP with only the AV fleet deployed, under the same settings. While this solution is suboptimal compared to that for the H-VRP-PD, it reduces computational demands. In practice, the algorithm’s performance can be further improved by exactly solving the H-VRP-PD, which would provide a true lower bound for the VRP-SA. For the CVRP, we use the state-of-the-art solver \textit{HGS-CVRP} developed by Vidal \cite{VIDAL2022105643}. The TSP solver used in Line 14 of Algorithm \ref{algo:re-routing-frp} is from Google OR-Tools. All other MILPs are solved using Gurobi 10.0.1, with several hyperparameters fine-tuned for each instance. Specifically, the \texttt{TimeLimit} for all Gurobi solvers is set to 300 seconds to enforce termination. Unless otherwise specified, the number of layers $k$ in the expanded graph is set to 2.

\subsection{Baseline Results} \label{result}
For each VRP-SA instance, we denote $f^P$ as the routing cost obtained using the proposed algorithm with the re-routing priority heuristic, and $f^{\overline{P}}$ as the routing cost obtained with a random re-routing order. Additionally, we denote $f_1$ as the near-optimal routing cost for the CVRP instance with only the AV fleet deployed, as obtained in Phase 1, and $f_2$ as the near-optimal routing cost for the CVRP instance with only the HDV fleet deployed, representing the cost without exploiting AVs at all. To account for variability due to instance sizes, we report the \textit{routing cost ratios} $f_2 / f_1$, $f^P / f_1$, and $f^{\overline{P}} / f_1$, instead of the absolute routing costs, for all instances ranging from 0 to 22. Given an instance, the quantity $f_2 / f_1 - 1$ approximately measures the percentage increase in routing cost when operating in a non-autonomous environment compared to a semi-autonomous environment with unlimited remote controllers. Similarly, $f^P / f_1 - 1$ or $f^{\overline{P}} / f_1 - 1$ quantify the percentage increase in routing cost under a limited budget of remote controllers.
\begin{figure}[!htbp]
    \centering
    \includegraphics[width = 0.9\linewidth]{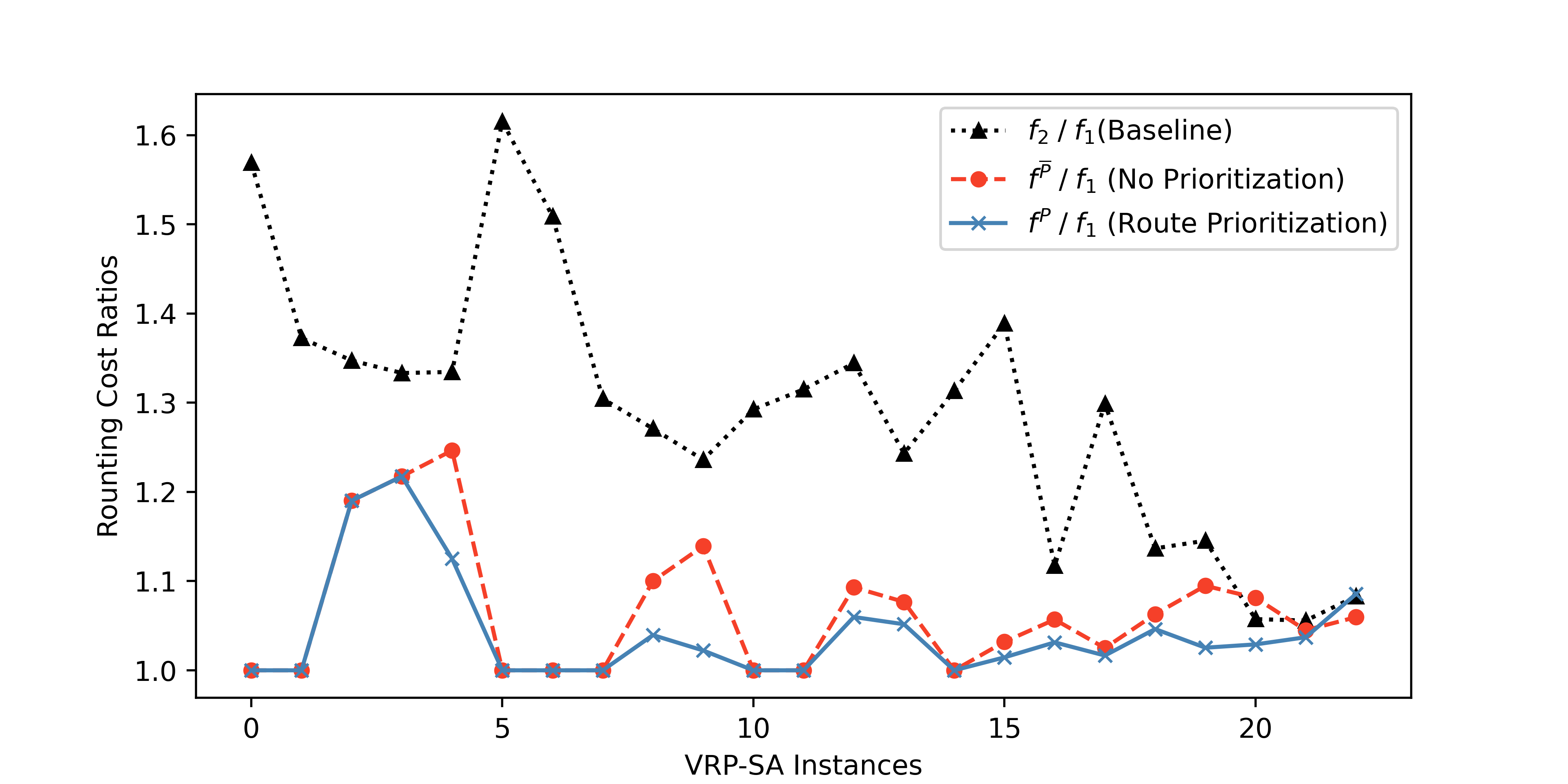}
    \caption{Routing cost ratios in different VRP-SA instances}
    \label{fig:main_result}
\end{figure}

As shown in Figure \ref{fig:main_result}, the proposed algorithm with only a random re-routing order generally yields lower routing cost ratios than the baseline with only the HDV fleet deployed (i.e., no AVs involved), except for instance 20. The greatest cost reduction is over 37.5\% in instance 5. If the re-routing priority heuristic is enabled, the algorithm performs even better in most cases, except for instance 22. Moreover, the algorithm successfully re-schedules the vehicle routes obtained in Phase 1 to feasibility without evoking the re-routing component in 8 out of 23 instances (i.e., instances 0, 1, 5, 6, 7, 10, 11, and 14), which is advantageous as it results in zero additional routing cost. More numerical details are available in Appendix \ref{Appendix C}.

We also observe two secondary patterns from the results: (1) As instance size increases, $f_2 / f_1$ tends to decrease. Since the instances in Figure \ref{fig:main_result} are ordered first by the number of customers and then by the number of available vehicles, this trend is expected. The number of divisions, $g_x$ and $g_y$, in the primary grid remains fixed across all instances, leading to a higher customer density within each grid cell for larger instances. Consequently, AV-enabled roads become relatively ``sparser'' in larger instances, diminishing the advantage of dispatching AVs. The effect of AV-enabled road density is further examined in Section \ref{heatmap}. (2) In instances where $f_2 / f_1$ is relatively low (e.g., instances 2, 3, 4, 8, 9, 13, and 16), $f^{\overline{P}} / f_1$ or $f^P / f_1$ tend to be relatively high, and vice versa. This pattern follows a similar rationale as the first. When $f_2 / f_1$ is low, AVs face greater difficulty in effectively leveraging AV-enabled roads, making re-routing less feasible and increasing its cost.

\subsection{Sensitivity Analysis of Input Parameters} \label{heatmap}
We conduct a series of experiments by varying the budgets and time-horizon factors while keeping other input parameters at their default values. Specifically, we set $B = \max(2, \; \texttt{round}(|M^a| \cdot B_{factor}))$ with $B_{factor} = (1/3, \; 1/2, \; 2/3)$ and $T_{factor} = (1.0, \; 1.1, \; 1.2, \; 1.3, \; 1.4, \; 1.5)$. The results shown in Figure \ref{fig:H1} and Figure \ref{fig:H2} are the average values across all 23 instances. As the budget and the tightness of the operational time horizon increase, the routing cost ratio $f^P / f_1$ decreases to as low as 1.006, resulting in a significant cost reduction of up to 22\%. Theoretically, an instance with a looser time-horizon factor should save more cost than that with a tighter time-horizon factor. The observed discrepancy for the results of $(B_{factor}, \; T_{factor}) = (1/2, \; 1.4)$ is attributed to the indeterministic behavior in the Gurobi solver under the parameter setting concerning the maximum running time.
\begin{figure}[!htbp]
    \centering
    \begin{subfigure}[t]{0.49\linewidth}
    \centering
    \includegraphics[width = \linewidth]{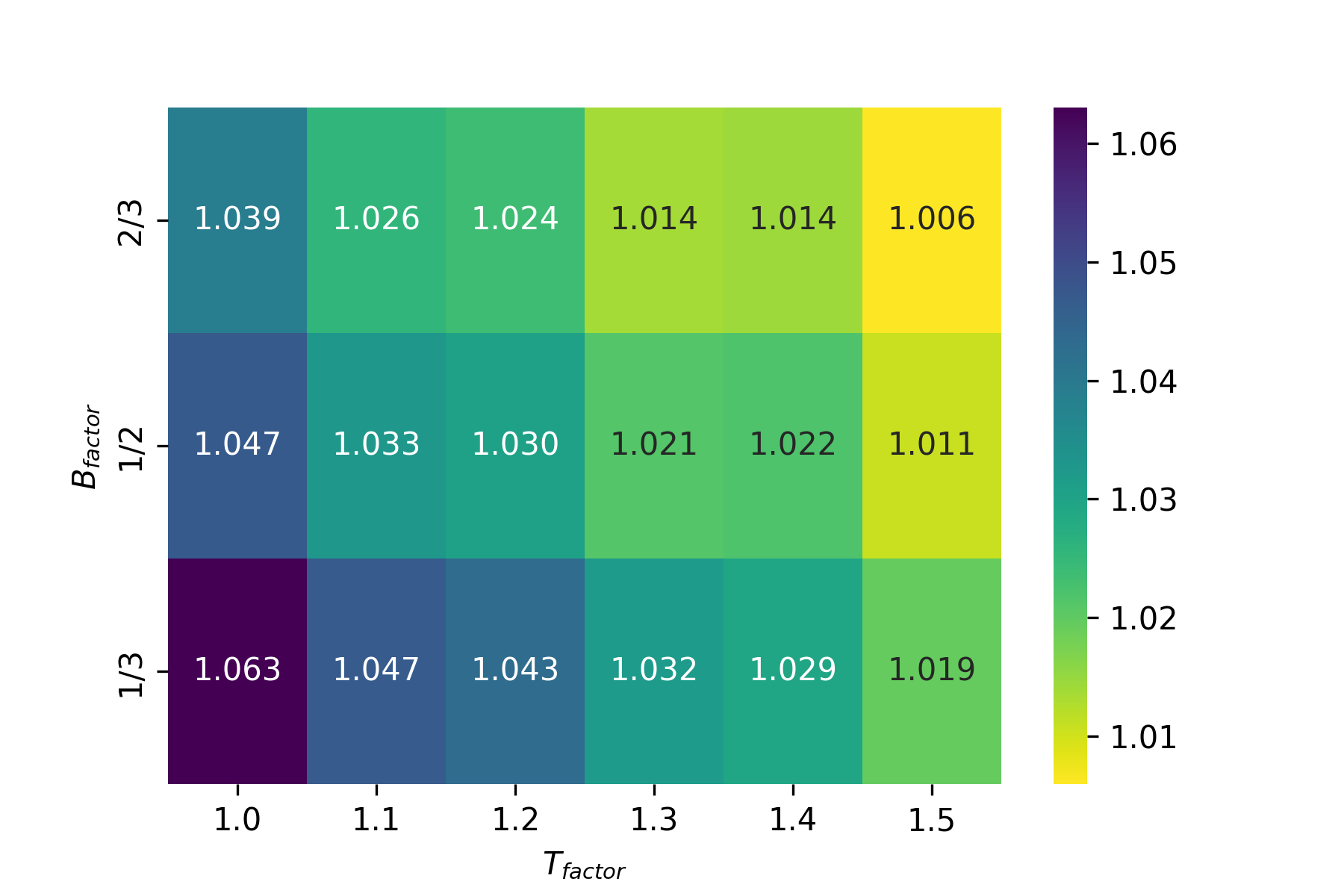}
    \caption{Average routing cost ratios $f^P / f^1$}
    \label{fig:H1}
    \end{subfigure}
    \begin{subfigure}[t]{0.49\linewidth}
    \centering
    \includegraphics[width = \linewidth]{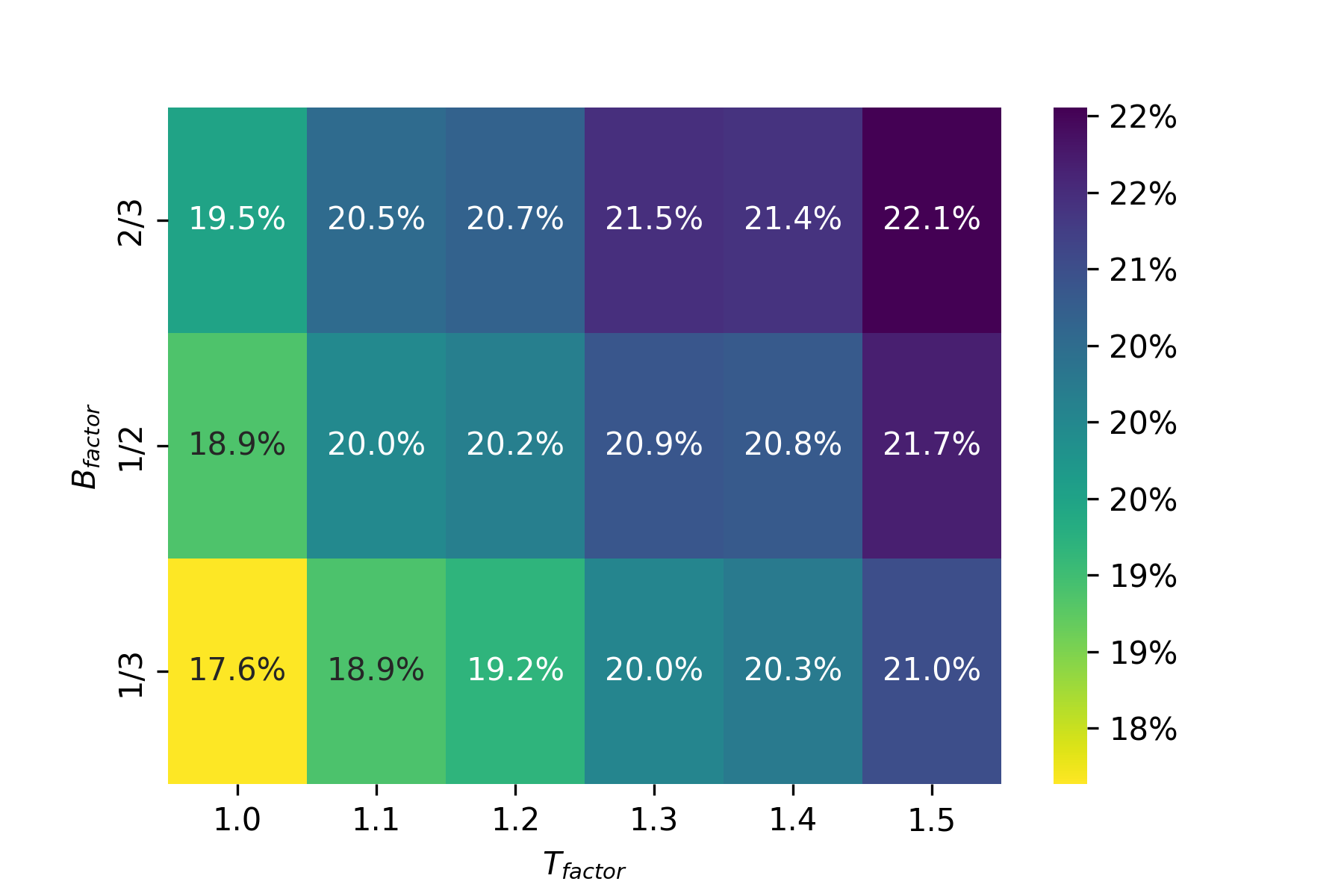}
    \caption{Average cost reduction due to VRP-SA}
    \label{fig:H2}
    \end{subfigure}
    \caption{Changing budget and the tightness of operational time horizon}
\end{figure}

We also vary the number of divisions $g_x$ and $g_y$ from 3 to 7 in the primary grid, while keeping other input parameters at their default values. Figure \ref{fig:grid_size} shows the \textit{inverse routing cost ratio} $f_1 / f_2$, where the ratio upper bound is 1.0, representing the cost of dispatching only HDVs. According to the result, it becomes increasingly advantageous to replace HDVs with AVs in networks with denser AV-enabled roads. Moreover, ratio $f^P / f_2$ closely follows the trend of $f_1/ f_2$, indicating that the proposed model and algorithm effectively leverage the increased density of AV-enabled roads to reduce routing costs. The density of AV-enabled roads serves as a metric for quantifying the degree of transition from a current environment to a fully autonomous one. Interestingly, the results reveal a two-stage pattern in this trend. In stage one (i.e., $g_x = g_y \leq 5$) when the density is relatively low, the gap between $f^P / f_1$ and $f_1 / f_2$ widens as the density increases. This is due to the budget constraint of remote controllers, which limits AVs from using all available road segments in routing. In stage two (i.e., $g_x = g_y \geq 5$) when the density surpasses a certain threshold, the gap starts to narrow as the density increases further. This is because the AV fleet relies less on remote control when most ordinary roads are upgraded to AV-enabled, thus diminishing the impact of the budget constraint on both re-scheduling and re-routing. This two-stage pattern reflects an inherited characteristic of the VRP-SA along the road network upgrade process. At the outset, dispatching AVs in routing tasks provides little or no benefit compared to HDVs. In the end, dispatching AVs is most beneficial but the problem structure is reduced back to a CVRP. Hence, high-quality solutions to the VRP-SA are the most valuable and yet the most difficult to obtain during the intermediate phase of semi-autonomous driving.
\begin{figure}[!htbp]
    \centering
    \includegraphics[width = 0.9\linewidth]{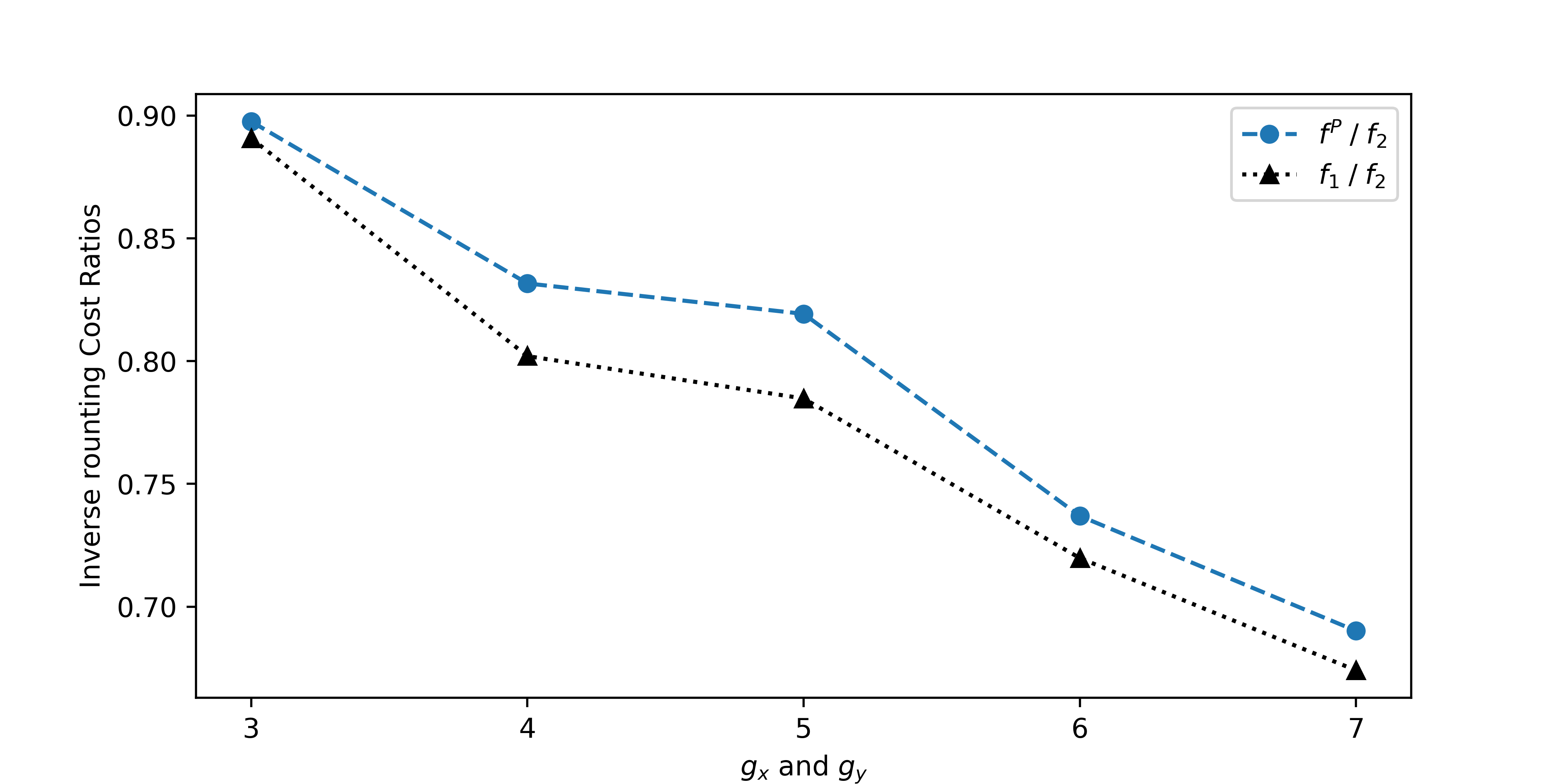}
    \caption{Average inverse routing cost ratios under various densities of AV-enabled roads}
    \label{fig:grid_size}
\end{figure}

\section{Conclusion and Future Directions} \label{sec:conclusion}
To conclude this paper, we introduce a novel variant of the vehicle routing problem, termed VRP-SA, designed for the semi-autonomous environment anticipated in the near future. We provide both a rigorous MILP model and an efficient solution approach. Extensive numerical experiments using a benchmark VRP dataset demonstrate the potential and benefits of deploying AV fleets for routing tasks, even with limited infrastructure support.

This paper represents a pioneering effort in exploring the impact of a semi-autonomous environment on a broad class of operations research problems related to fleet management and transportation networks. Although our focus is specifically on vehicle routing, we believe our findings can inspire further research in related areas. For example, a relevant question might be: Which subset of AV-enabled roads should be upgraded first in a city to support AV deployment? This could lead to novel network design problems that are not yet characterized by existing frameworks.

Regarding the VRP-SA itself, there is potential for further improvement in solution quality. The two-phase algorithm currently decomposes customer assignment and sequencing, limiting its scope to local search.Future research could investigate more advanced algorithms that enable a more comprehensive exploration of the solution space. Potential approaches include Lagrangian relaxation and Benders decomposition, which can exploit the special structures of the original MILP, as well as column generation techniques based on the set partitioning formulation. 

\printbibliography

@article{VIDAL2022105643,
title = {Hybrid genetic search for the CVRP: Open-source implementation and SWAP* neighborhood},
journal = {Computers \& Operations Research},
volume = {140},
pages = {105643},
year = {2022},
issn = {0305-0548},
author = {Thibaut Vidal},
keywords = {Vehicle routing problem, Neighborhood search, Hybrid genetic search, Open source}
}

@article{miller1960integer,
  title={Integer programming formulation of traveling salesman problems},
  author={Miller, Clair E and Tucker, Albert W and Zemlin, Richard A},
  journal={Journal of the ACM (JACM)},
  volume={7},
  number={4},
  pages={326--329},
  year={1960},
  publisher={ACM New York, NY, USA}
}

@article{RODRIGUEZPEREIRA2019615,
title = {The Steiner Traveling Salesman Problem and its extensions},
journal = {European Journal of Operational Research},
volume = {278},
number = {2},
pages = {615-628},
year = {2019},
author = {Jessica Rodríguez-Pereira and Elena Fernández and Gilbert Laporte and Enrique Benavent and Antonio Martínez-Sykora},
}

@article{MOLINA2020103745,
title = {The heterogeneous vehicle routing problem with time windows and a limited number of resources},
journal = {Engineering Applications of Artificial Intelligence},
volume = {94},
pages = {103745},
year = {2020},
author = {Jose C. Molina and Jose L. Salmeron and Ignacio Eguia and Jesus Racero},
}

@article{PRINS2009916,
title = {Two memetic algorithms for heterogeneous fleet vehicle routing problems},
journal = {Engineering Applications of Artificial Intelligence},
volume = {22},
number = {6},
pages = {916-928},
year = {2009},
note = {Artificial Intelligence Techniques for Supply Chain Management},
author = {Christian Prins},
}

@article{IMRAN2009509,
title = {A variable neighborhood-based heuristic for the heterogeneous fleet vehicle routing problem},
journal = {European Journal of Operational Research},
volume = {197},
number = {2},
pages = {509-518},
year = {2009},
author = {Arif Imran and Said Salhi and Niaz A. Wassan},
}

@article{LI20101111,
title = {An adaptive memory programming metaheuristic for the heterogeneous fixed fleet vehicle routing problem},
journal = {Transportation Research Part E: Logistics and Transportation Review},
volume = {46},
number = {6},
pages = {1111-1127},
year = {2010},
author = {Xiangyong Li and Peng Tian and Y.P. Aneja},
}

@article{baldacci2009unified,
  title={A unified exact method for solving different classes of vehicle routing problems},
  author={Baldacci, Roberto and Mingozzi, Aristide},
  journal={Mathematical Programming},
  volume={120},
  pages={347--380},
  year={2009},
  publisher={Springer}
}

@book{toth2014vehicle,
  title={Vehicle routing: problems, methods, and applications},
  author={Toth, Paolo and Vigo, Daniele},
  year={2014},
  publisher={SIAM}
}

@article{braekers2016vehicle,
  title={The vehicle routing problem: State of the art classification and review},
  author={Braekers, Kris and Ramaekers, Katrien and Van Nieuwenhuyse, Inneke},
  journal={Computers \& Industrial Engineering},
  volume={99},
  pages={300--313},
  year={2016},
  publisher={Elsevier}
}

@article{lin2014survey,
  title={Survey of green vehicle routing problem: past and future trends},
  author={Lin, Canhong and Choy, King Lun and Ho, George TS and Chung, Sai Ho and Lam, HY},
  journal={Expert Systems with Applications},
  volume={41},
  number={4},
  pages={1118--1138},
  year={2014},
  publisher={Elsevier}
}

@article{kocc2016thirty,
  title={Thirty years of heterogeneous vehicle routing},
  author={Ko{\c{c}}, {\c{C}}a{\u{g}}r{\i} and Bekta{\c{s}}, Tolga and Jabali, Ola and Laporte, Gilbert},
  journal={European Journal of Operational Research},
  volume={249},
  number={1},
  pages={1--21},
  year={2016},
  publisher={Elsevier}
}

@article{montoya2015literature,
  title={A literature review on the vehicle routing problem with multiple depots},
  author={Montoya-Torres, Jairo R and Franco, Juli{\'a}n L{\'o}pez and Isaza, Santiago Nieto and Jim{\'e}nez, Heriberto Felizzola and Herazo-Padilla, Nilson},
  journal={Computers \& Industrial Engineering},
  volume={79},
  pages={115--129},
  year={2015},
  publisher={Elsevier}
}

@article{braysy2005vehicle_2,
  title={Vehicle routing problem with time windows, Part II: Metaheuristics},
  author={Br{\"a}ysy, Olli and Gendreau, Michel},
  journal={Transportation Science},
  volume={39},
  number={1},
  pages={119--139},
  year={2005},
  publisher={INFORMS}
}

@article{braysy2005vehicle_1,
  title={Vehicle routing problem with time windows, Part I: Route construction and local search algorithms},
  author={Br{\"a}ysy, Olli and Gendreau, Michel},
  journal={Transportation science},
  volume={39},
  number={1},
  pages={104--118},
  year={2005},
  publisher={INFORMS}
}

@article{VIDAL2014658,
title = {A unified solution framework for multi-attribute vehicle routing problems},
journal = {European Journal of Operational Research},
volume = {234},
number = {3},
pages = {658-673},
year = {2014},
author = {Thibaut Vidal and Teodor Gabriel Crainic and Michel Gendreau and Christian Prins},
}

@article{uchoa2017new,
  title={New benchmark instances for the capacitated vehicle routing problem},
  author={Uchoa, Eduardo and Pecin, Diego and Pessoa, Artur and Poggi, Marcus and Vidal, Thibaut and Subramanian, Anand},
  journal={European Journal of Operational Research},
  volume={257},
  number={3},
  pages={845--858},
  year={2017},
  publisher={Elsevier}
}

@article{augerat1995computational,
  title={Computational results with a branch and cut code for the capacitated vehicle routing problem},
  author={Augerat, Philippe and Naddef, D and Belenguer, JM and Benavent, E and Corberan, A and Rinaldi, Giovanni},
  year={1995}
}

@article{solomon1987algorithms,
  title={Algorithms for the vehicle routing and scheduling problems with time window constraints},
  author={Solomon, Marius M},
  journal={Operations Research},
  volume={35},
  number={2},
  pages={254--265},
  year={1987},
  publisher={Informs}
}

@inproceedings{li2021learning,
title={Learning to delegate for large-scale vehicle routing},
author={Sirui Li and Zhongxia Yan and Cathy Wu},
booktitle={Advances in Neural Information Processing Systems},
editor={A. Beygelzimer and Y. Dauphin and P. Liang and J. Wortman Vaughan},
year={2021}
}

@article{nazari2018reinforcement,
  title={Reinforcement learning for solving the vehicle routing problem},
  author={Nazari, Mohammadreza and Oroojlooy, Afshin and Snyder, Lawrence and Tak{\'a}c, Martin},
  journal={Advances in Neural Information Processing Systems},
  volume={31},
  year={2018}
}

@inproceedings{
kool2018attention,
title={Attention, Learn to Solve Routing Problems!},
author={Wouter Kool and Herke van Hoof and Max Welling},
booktitle={International Conference on Learning Representations},
year={2019}
}

@article{fukasawa2006robust,
  title={Robust branch-and-cut-and-price for the capacitated vehicle routing problem},
  author={Fukasawa, Ricardo and Longo, Humberto and Lysgaard, Jens and Arag{\~a}o, Marcus Poggi de and Reis, Marcelo and Uchoa, Eduardo and Werneck, Renato F},
  journal={Mathematical programming},
  volume={106},
  pages={491--511},
  year={2006},
  publisher={Springer}
}

@article{fischetti1994branch,
  title={A branch-and-bound algorithm for the capacitated vehicle routing problem on directed graphs},
  author={Fischetti, Matteo and Toth, Paolo and Vigo, Daniele},
  journal={Operations Research},
  volume={42},
  number={5},
  pages={846--859},
  year={1994},
  publisher={INFORMS}
}

@article{desrochers1992new,
  title={A new optimization algorithm for the vehicle routing problem with time windows},
  author={Desrochers, Martin and Desrosiers, Jacques and Solomon, Marius},
  journal={Operations Research},
  volume={40},
  number={2},
  pages={342--354},
  year={1992},
  publisher={INFORMS}
}

@article{pisinger2007general,
  title={A general heuristic for vehicle routing problems},
  author={Pisinger, David and Ropke, Stefan},
  journal={Computers \& Operations Research},
  volume={34},
  number={8},
  pages={2403--2435},
  year={2007},
  publisher={Elsevier}
}

@article{bell2004ant,
  title={Ant colony optimization techniques for the vehicle routing problem},
  author={Bell, John E and McMullen, Patrick R},
  journal={Advanced Engineering Informatics},
  volume={18},
  number={1},
  pages={41--48},
  year={2004},
  publisher={Elsevier}
}

@article{homberger1999two,
  title={Two evolutionary metaheuristics for the vehicle routing problem with time windows},
  author={Homberger, J{\"o}rg and Gehring, Hermann},
  journal={INFOR: Information Systems and Operational Research},
  volume={37},
  number={3},
  pages={297--318},
  year={1999},
  publisher={Taylor \& Francis}
}

@article{bramel1995location,
  title={A location based heuristic for general routing problems},
  author={Bramel, Julien and Simchi-Levi, David},
  journal={Operations Research},
  volume={43},
  number={4},
  pages={649--660},
  year={1995},
  publisher={INFORMS}
}

@article{fisher1981generalized,
  title={A generalized assignment heuristic for vehicle routing},
  author={Fisher, Marshall L and Jaikumar, Ramchandran},
  journal={Networks},
  volume={11},
  number={2},
  pages={109--124},
  year={1981},
  publisher={Wiley Online Library}
}

@article{gillett1974heuristic,
  title={A heuristic algorithm for the vehicle-dispatch problem},
  author={Gillett, Billy E and Miller, Leland R},
  journal={Operations Research},
  volume={22},
  number={2},
  pages={340--349},
  year={1974},
  publisher={INFORMS}
}

@article{schneider2014electric,
  title={The electric vehicle-routing problem with time windows and recharging stations},
  author={Schneider, Michael and Stenger, Andreas and Goeke, Dominik},
  journal={Transportation Science},
  volume={48},
  number={4},
  pages={500--520},
  year={2014},
  publisher={INFORMS}
}

@article{erdougan2012green,
  title={A green vehicle routing problem},
  author={Erdo{\u{g}}an, Sevgi and Miller-Hooks, Elise},
  journal={Transportation Research Part E: Logistics and Transportation Review},
  volume={48},
  number={1},
  pages={100--114},
  year={2012},
  publisher={Elsevier}
}

@article{renaud1996tabu,
  title={A tabu search heuristic for the multi-depot vehicle routing problem},
  author={Renaud, Jacques and Laporte, Gilbert and Boctor, Fayez F},
  journal={Computers \& Operations Research},
  volume={23},
  number={3},
  pages={229--235},
  year={1996},
  publisher={Elsevier}
}

@article{golden1984fleet,
  title={The fleet size and mix vehicle routing problem},
  author={Golden, Bruce and Assad, Arjang and Levy, Larry and Gheysens, Filip},
  journal={Computers \& Operations Research},
  volume={11},
  number={1},
  pages={49--66},
  year={1984},
  publisher={Elsevier}
}

@article{savelsbergh1995general,
  title={The general pickup and delivery problem},
  author={Savelsbergh, Martin WP and Sol, Marc},
  journal={Transportation Science},
  volume={29},
  number={1},
  pages={17--29},
  year={1995},
  publisher={INFORMS}
}

@article{kolen1987vehicle,
  title={Vehicle routing with time windows},
  author={Kolen, Antoon WJ and Rinnooy Kan, AHG and Trienekens, Harry WJM},
  journal={Operations Research},
  volume={35},
  number={2},
  pages={266--273},
  year={1987},
  publisher={INFORMS}
}

@article{lenstra1981complexity,
  title={Complexity of vehicle routing and scheduling problems},
  author={Lenstra, Jan Karel and Kan, AHG Rinnooy},
  journal={Networks},
  volume={11},
  number={2},
  pages={221--227},
  year={1981},
  publisher={Wiley Online Library}
}

@article{clarke1964scheduling,
  title={Scheduling of vehicles from a central depot to a number of delivery points},
  author={Clarke, Geoff and Wright, John W},
  journal={Operations Research},
  volume={12},
  number={4},
  pages={568--581},
  year={1964},
  publisher={Informs}
}

@article{dantzig1959truck,
  title={The truck dispatching problem},
  author={Dantzig, George B and Ramser, John H},
  journal={Management Science},
  volume={6},
  number={1},
  pages={80--91},
  year={1959},
  publisher={Informs}
}

@Online{enride2022,
 author = {Ed Garsten},
 year = {2022},
 title = {Einride Gets Go-Ahead For Driverless Electric Trucks On Public Roads},
 journal = {Forbes},
 url = {https://www.forbes.com/sites/edgarsten/2022/06/23/einride-gets-go-ahead-for-driverless-trucks-on-public-roads/?sh=1b6be34ac0c7}
}

@Online{Waymophoenix,
 author = {ANDREW J. HAWKINS},
 year = {2022},
 title = {Waymo’s driverless vehicles are picking up passengers in downtown Phoenix},
 journal = {The Verge},
 url = {https://www.theverge.com/2022/8/29/23323593/waymo-driverless-vehicles-passengers-downtown-phoenix}
}

@article{HABOUCHA201737,
title = {User preferences regarding autonomous vehicles},
journal = {Transportation Research Part C: Emerging Technologies},
volume = {78},
pages = {37-49},
year = {2017},
issn = {0968-090X},
author = {Chana J. Haboucha and Robert Ishaq and Yoram Shiftan},
keywords = {Autonomous vehicles, Car sharing, Discrete choice, User preference, Latent variables},
}

@Online{Honda2021,
 author = {Colin Beresford},
 year = {2021},
 title = {Honda Legend Sedan with Level 3 Autonomy Available for Lease in Japan},
 journal = {Car and Drive},
 url = {https://www.caranddriver.com/news/a35729591/honda-legend-level-3-autonomy-leases-japan/}
}

@article{sae2018taxonomy,
  title={Taxonomy and definitions for terms related to driving automation systems for on-road motor vehicles},
  author={Sae International},
  journal={SAE international},
  volume={4970},
  number={724},
  pages={1--5},
  year={2018}
}

@techreport{leslie2022analysis,
author={Leslie, Alex and Murray, Dan},
title={An Analysis of the Operational Costs of Trucking},
institution = {American Transportation Research Institute},
year={2022}
}
\newpage
 
\appendix

\section{Proofs of Lemma \ref{lemma:1}, Lemma \ref{lemma:2}, and Proposition \ref{prop: k}} \label{app: A}

\lemmanode*
\begin{proof}
Suppose that there exists an optimal route $P^{*}$ for a given STSP instance. Denote by $u$ and $v$ two consecutive required nodes in $P^{*}$ and by $P(u, v) \subseteq P^{*}$ the sub-path between $u$ and $v$. Then, every node $x \in P(u, v)$ is visited only once in the scope of $P(u, v)$ as $P(u, v)$ is the minimum-cost path between $u$ and $v$ and there is no positive cycle in any minimum-cost paths. Then, if $x$ is not the depot, the number of times $x$ can be visited in the optimal route $P^{*}$ is upper bounded by $n - 1$ as it can potentially appear in the sub-paths between any pairs of consecutive required nodes. If $x$ happens to be the depot, this upper bound is $n$. Furthermore, the example depicted in Figure \ref{fig:star} demonstrates that the upper bound is tight. In a star network, all six nodes are required nodes with node 0 specified as the depot. Any sequences visiting nodes 1 to 5 exactly once are optimal. In all the optimal solutions to the STSP, node 0 is visited 6 times. The pattern can be found in a more general network where all required nodes are located in different isolated regions that can only be accessed via a single gateway. 
\end{proof}

\lemmaedge*
\begin{proof}
The formal proof of Lemma \ref{lemma:2} is similar to that of Lemma \ref{lemma:1} except that the tight upper bound is demonstrated by another example depicted in Figure \ref{fig:one-way}. In a one-way-dominated network shown in Figure \ref{fig:one-way}, nodes 0, 6, 8, and 10 are required nodes with node 0 specified as the depot. An optimal solution to the STSP is (0, 1, 2, 6, 5, 1, 2, 3, 8, 7, 5, 1, 2, 3, 4, 10, 9, 7, 5, 1, 0). Node 1 is visited 4 times and edge (1, 2) is traversed 3 times. The pattern can be found in a more general network where all required nodes are located in different isolated one-way-dominated regions that can only be accessed via an inevitable passage.
\end{proof}

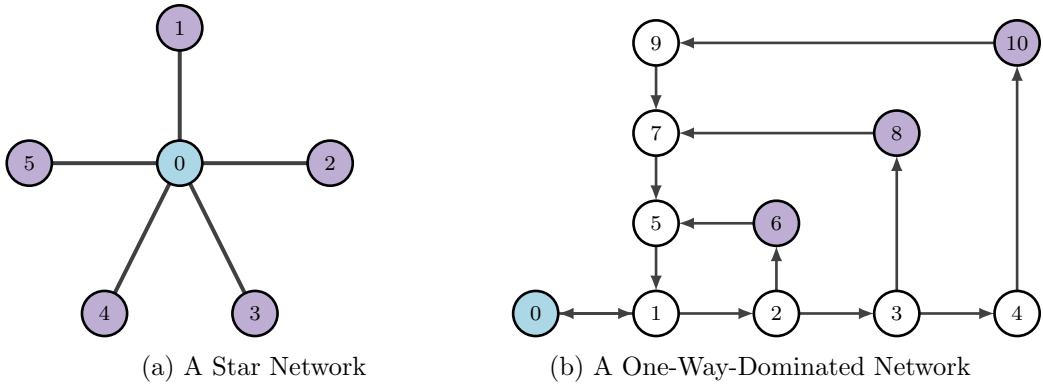
\begin{figure}[!htbp]
    \centering
    \begin{subfigure}[b]{0.4\textwidth}
        \begin{tikzpicture}
        \Vertex[x = 0, y = 1.8, label = 1, RGB, color = {190,174,212}]{1}
        \Vertex[x = 0, y = 0, label = 0]{0}
        \Vertex[x = 2, y = 0, label = 2, RGB, color = {190,174,212}]{2}
        \Vertex[x = 1, y = -2, label = 3, RGB, color = {190,174,212}]{3}
        \Vertex[x = -1, y = -2, label = 4, RGB, color = {190,174,212}]{4}
        \Vertex[x = -2, y = 0, label = 5, RGB, color = {190,174,212}]{5}
    
        \Edge(0)(1)
        \Edge(0)(2)
        \Edge(0)(3)
        \Edge(0)(4)
        \Edge(0)(5)
        
        \end{tikzpicture}
        \caption{A Star Network}
        \label{fig:star}
    \end{subfigure}
    \begin{subfigure}[b]{0.4\textwidth}
        \begin{tikzpicture}
        \Vertex[x = -4, y = -1.8, label = 0]{0}
        \Vertex[x = -2.4, y = -1.8, label = 1, color = white]{1}
        \Vertex[x = -0.8, y = -1.8, label = 2, color = white]{2}
        \Vertex[x = 0.8, y = -1.8, label = 3, color = white]{3}
        \Vertex[x = 2.4, y = -1.8, label = 4, color = white]{4}
        \Vertex[x = -2.4, y = -0.6, label = 5, color = white]{5}
        \Vertex[x = -0.8, y = -0.6, label = 6, RGB, color = {190,174,212}]{6}
        \Vertex[x = -2.4, y = 0.6, label = 7, color = white]{7}
        \Vertex[x = 0.8, y = 0.6, label = 8, RGB, color = {190,174,212}]{8}
        \Vertex[x = -2.4, y = 1.8, label = 9, color = white]{9}
        \Vertex[x = 2.4, y = 1.8, label = 10, RGB, color = {190,174,212}]{10}
        
        \Edge[lw = 1pt, Direct](0)(1)
        \Edge[lw = 1pt, Direct](1)(0)
        \Edge[lw = 1pt, Direct](1)(2)
        \Edge[lw = 1pt, Direct](2)(3)
        \Edge[lw = 1pt, Direct](3)(4)
        
        \Edge[lw = 1pt, Direct](2)(6)
        \Edge[lw = 1pt, Direct](6)(5)
        \Edge[lw = 1pt, Direct](5)(1)

        \Edge[lw = 1pt, Direct](3)(8)
        \Edge[lw = 1pt, Direct](8)(7)
        \Edge[lw = 1pt, Direct](7)(5)
        
        \Edge[lw = 1pt, Direct](4)(10)
        \Edge[lw = 1pt, Direct](10)(9)
        \Edge[lw = 1pt, Direct](9)(7)
        
        \end{tikzpicture}
        \caption{A One-Way-Dominated Network}
        \label{fig:one-way}
    \end{subfigure}
    \caption{Two types of networks to prove tight upper bounds in Lemma \ref{lemma:1} and Lemma \ref{lemma:2}}
    \label{fig:example_network}
\end{figure}

\propositionk*
\begin{proof}
Suppose $D(m)$ is the set of customers served by vehicle $m$ in the optimal solution and $o$ is the depot. Then, $P(m)$ is the optimal solution to the STSP in the same graph with $D(m) \cup \{o\}$ as the set of required nodes. Based on Lemma \ref{lemma:2}, the upper bound of the number of times $v$ being visited and $(u, v)$ being visited are $|D(m)| + 1$ and $|D(m)|$ respectively. By definition, $k$ is the maximum number of customers vehicle $m$ can potentially serve under any circumstances, meaning that $|D(m)| \leq k$. Thus, the proposition holds.
\end{proof}

\section{Flexible Travel Time for Autonomous Vehicles} \label{Appendix B}
In this section, we further assume an AV is allowed to increase or decrease its speed to some extent when it is cruising along a road segment $(i, j)$. Then, an AV can flexibly adjust the travel time on $(i, j)$ as long as the adjustment is within a threshold, which further enlarges the temporal feasible space of the problem. Define $\underline{\gamma}_{ij}$ ($\overline{\gamma}_{ij}$) as the \textit{minimum (maximum) adjustment factor} for the travel time on road segment $(i, j)$. Then, $[\underline{\gamma}_{ij} \Delta t_{ij}, \;  \overline{\gamma}_{ij} \Delta t_{ij}]$ is the corresponding flexible travel time interval. In this setting, Constraint \ref{c7} in the MILP in Section \ref{sec:milp} is removed and Constraint \ref{c6} is generalized to Constraints \ref{lower} and \ref{upper} for every AV $m \in M^a$. Note that the constraints corresponding to HDVs are kept intact. 
\begin{gather}
    t_{jm} \geq t_{im} + \underline{\gamma}_{ij} \Delta t_{ijm} + T \; (x_{ijm} - 1), \qquad \forall \; (i, j) \in E_e, \; m \in M^a, \label{lower} \\ 
    t_{jm} \leq t_{im} + \overline{\gamma}_{ij} \Delta t_{ijm} + T \; (1 - x_{ijm}), \qquad \forall \; (i, j) \in E_e, \; m \in M^a. \label{upper}
\end{gather}

One may argue that the flexible travel time assumption for the AV fleet is ill-conceived for two reasons: (1) It can impair network efficiency by blocking the traffic in some primary avenues. (2) It can affect the routing cost and change the objective of the problem. For this, we emphasize that it only generalizes the model and provides the possibility to design a better dispatching and routing strategy in the semi-autonomous environment. The adjustment factor of every road segment can be determined separately and designed according to the input network. Also, we admit a linear mapping between the routing cost and the travel time, which makes the problem objective easily adaptable to cases with flexible travel time.

\subsection{Effects on the Re-Scheduling MILP}
Following the notations used in Section \ref{sec:frp1}, when the flexible travel time assumption is adopted, the time interval of any sub-route $r$ can be independently adjusted by a factor of $\underline{\gamma}_{r}$ for shrinking or $\overline{\gamma}_{r}$ for expanding, where $\underline{\gamma}_r$ and $\overline{\gamma}_r$ are determined in Equations \ref{eq:B1-1} and \ref{eq:B1-2}:
\begin{align}
    \underline{\gamma}_r = \frac{1}{\Delta t_r} \sum_{(i, j) \in r} \underline{\gamma}_{ij} \Delta t_{ij} \label{eq:B1-1}\\
    \overline{\gamma}_r = \frac{1}{\Delta t_r} \sum_{(i, j) \in r} \overline{\gamma}_{ij} \Delta t_{ij} \label{eq:B1-2}
\end{align}

Figure \ref{fig:frp1-1} illustrates how this assumption results in a larger feasible space of the re-scheduling FRP. The original schedules are the same as the ones described in Figure \ref{fig:frp1}, except that the second ordinary sub-route of vehicle $m_1$ is longer. Consequently, the AV-enabled sub-route between the second and the third ordinary sub-routes becomes shorter. In this case, the re-scheduling FRP with fixed travel times is infeasible, as shifting the entire routing schedule of vehicle $m_2$ cannot bypass all overlaps. However, under the flexible travel time assumption, we can further delay the second ordinary sub-route of vehicle $m_2$ by an extra $\delta'$ to eliminate the overlap. This adjustment is equivalent to increasing the travel time of the preceding AV-enabled sub-route by $\delta'$.  
\begin{figure}[!htbp]
    \centering
    \begin{tikzpicture}

        \draw[thick] (-7, 3) -- (-7, 0.5);
        \draw[thick] (-7, -0.5) -- (-7, -3);
        \draw[thick] (8, 3) -- (8, 0.5);
        \draw[thick] (8, -0.5) -- (8, -3);

        \draw[dashed, very thick] (-6, 1) -- (-6, -2);
        \draw[dashed, very thick] (-2, 1) -- (-2, -2);
        \draw[dashed, very thick] (2.5, 1) -- (2.5, -2);

        \draw[pattern=north east lines, pattern color = vertexfill] (-1.5, 2.5) rectangle (-0.5, 2);
        \draw[pattern=north east lines, pattern color = vertexfill] (-1.5, -1) rectangle (-0.5, -1.5);

        \draw[dashed, thick] (-6, -2) rectangle (-5, -2.5);
        \draw[dashed, thick] (-2, -2) rectangle (-0.5, -2.5);
        \draw[dashed, thick] (2.5, -2) rectangle (3.5, -2.5);
        \draw[dashed, thick, Orchid, pattern=north east lines, pattern color = Orchid] (-1, -2) rectangle (-0.5, -2.5);

        \node[text width=1cm] at (-7,3.5) {$t = 0$};
        \node[text width=1cm] at (8,3.5) {$t = T$};
        \node[text width=1cm] at (-7.5, 2.25) {$m_1$};
        \node[text width=1cm] at (-7.5, 1.25) {$m_2$};
        \node[text width=1cm] at (-7.5, -1.25) {$m_1$};
        \node[text width=1cm] at (-7.5, -2.25) {$m_2$};

        \node[text width=2cm] at (0.2, 3) {Original};
        \node[text width=2cm] at (0, -0.5) {Rescheduled};
        
        \node[text width = 2cm] at (-5, -3) {$\underbracket{\hspace{1cm}}_{\displaystyle \delta}$};
        \node[text width = 2cm] at (-1, -3) {$\underbracket{\hspace{1.5cm}}_{\displaystyle \delta + \delta'}$};
        \node[text width = 2cm] at (3.5, -3) {$\underbracket{\hspace{1cm}}_{\displaystyle \delta}$};
        
        \begin{scope}[on background layer]
        \filldraw [fill = vertexfill, draw = black] (-7, 2.5) rectangle (-5, 2);
        \filldraw [fill = vertexfill, draw = black] (-3, 2.5) rectangle (-1.5, 2);
        \filldraw [fill = vertexfill, draw = black] (0.5, 2.5) rectangle (3.5, 2);
        \filldraw [fill = vertexfill, draw = black] (6, 2.5) rectangle (7, 2);
        
        \filldraw [fill = vertexfill, draw = black] (-6, 1.5) rectangle (-4, 1);
        \filldraw [fill = vertexfill, draw = black] (-2, 1.5) rectangle (-1, 1);
        \filldraw [fill = vertexfill, draw = black] (2.5, 1.5) rectangle (4, 1);
        
        \filldraw [fill = vertexfill, draw = black] (-7, -1) rectangle (-5, -1.5);
        \filldraw [fill = vertexfill, draw = black] (-3, -1) rectangle (-1.5, -1.5);
        \filldraw [fill = vertexfill, draw = black] (0.5, -1) rectangle (3.5, -1.5);
        \filldraw [fill = vertexfill, draw = black] (6, -1) rectangle (7, -1.5);
        
        \filldraw [fill = Orchid, draw = black] (-5, -2) rectangle (-3, -2.5);
        \filldraw [fill = Orchid, draw = black] (-0.5, -2) rectangle (0.5, -2.5);
        \filldraw [fill = Orchid, draw = black] (3.5, -2) rectangle (5, -2.5);

        \end{scope}

        
    \end{tikzpicture}
     \caption{The illustration of the re-scheduling FRP with flexible travel time}
    \label{fig:frp1-1}
\end{figure}
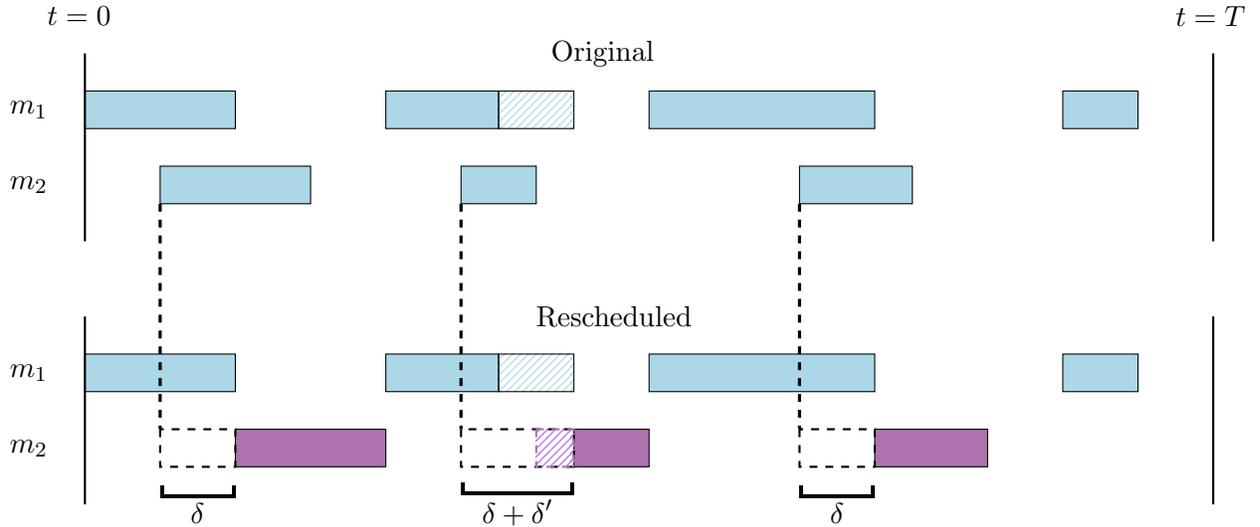

To facilitate this more powerful re-scheduling strategy, we can simply relax the hard constraints of fixed travel time accumulation in \ref{frp1:c1} to \ref{frp1:c2} to the soft constraints shown in \ref{frp1:c8} to \ref{frp1:c11}.
\begin{align}
    &t^1_r \leq \overline{t}_m + \sum_{r': \; r' \, \prec \, r} \overline{\gamma}_{r'} \Delta t_{r'}, \qquad r \in R,  \label{frp1:c8} \\
    &t^1_r \geq \overline{t}_m + \sum_{r': \; r' \, \prec \, r} \underline{\gamma}_{r'} \Delta t_{r'}, \qquad r \in R, \label{frp1:c9} \\
    &t_r^2 \leq t_r^1 + \overline{\gamma}_r \Delta t_r, \qquad \forall \; r \in R, \label{frp1:c10} \\
    &t_r^2 \geq t_r^1 + \underline{\gamma}_r \Delta t_r, \qquad \forall \; r \in R.   \label{frp1:c11}
\end{align}

\section{The Re-Routing MILP} \label{Appendix C}
The re-routing MILP takes as input an expanded graph $G_e$, a set of customers $D$, an end time of operation $T$, and a set of \textit{infeasible time intervals} $Q$, where each interval has a start times $a_q$ and an end time $b_q$. During each infeasible time interval, the budget of remote controllers is already exhausted by other AVs. The objective is to find a minimum-cost tour visiting all customers exactly once. 

We adopt most notations from Section \ref{sec:milp} and overwrite the following for simplicity: Define a binary decision variable $x_{ij}$ for every edge $(i, j) \in E_e$ to indicate whether the edge is selected in the route. Define a binary decision variable $y_i$ for every customer $i \in D_e$ to indicate whether a duplicate of the customer in the expanded graph is served. Define a continuous variable $t_i \in [0, T]$ for every node $i \in V_e$ to represent the timestamp of the AV visiting the node. Define a binary decision variable $\alpha_{qij}$ for every $q \in Q$ and every edge $(i, j) \in E_e$, with $\alpha_{qij} = 1$ indicating the timestamp of the vehicle exiting edge $(i, j)$ is later than the start time of interval $q$. Define a binary decision variable $\beta_{qij}$ for every $q \in Q$ and every edge $(i, j) \in E_e$, with $\beta_{qij} = 1$ indicating the timestamp of the vehicle entering edge $(i, j)$ is earlier than the end time of interval $q$. Then, the re-routing MILP can be formulated as follows.  
\begin{mini}|s|[2]<b>
    {\bm{x}, \bm{y}, \bm{t}, \bm{\alpha}, \bm{\beta}} 
    {\sum_{(i, j) \in E^a_e} c_{ij}^1 x_{ij} + \sum_{(i, j) \in E^o_e} c_{ij}^2 x_{ij}}{}{}
    \addConstraint{\sum_{j \in \delta^-(i)} x_{ij}}{= \sum_{j \in \delta^+(i)} x_{ji}, \quad}{\forall \; i \in V_e - \{o, s\}}
    \addConstraint{\sum_{j \in \delta^-(o)} x_{oj} = \sum_{j \in \delta^+(s)} x_{js} \leq 1,}{}
    \addConstraint{\sum_{i \in D^V(j)} y_{i}}{= 1, \quad}{\forall \; j \in D}
    \addConstraint{\sum_{j \in \delta^+(i)} x_{ji}}{\geq y_{i}, \quad}{\forall \; i \in D_e}
    \addConstraint{x_{ij}}{\leq y_{i}, \quad}{\forall \; (i, j) \in A}
    \addConstraint{t_{j}}{\geq t_{i} + \Delta t_{ij} + T (x_{ij} - 1), \quad}{\forall \; (i, j) \in E_e}
    \addConstraint{t_{s}}{= t_{o} + \sum_{(i,j) \in E_e} \Delta t_{ij} x_{ij} \leq T}
    \addConstraint{t_{j}}{\leq a_q + T \alpha_{qij}, \quad}{\forall \; q \in Q, \forall \; (i, j) \in E^o_e}
    \addConstraint{t_{j}}{\geq a_q + T (\alpha_{qij} - 1), \quad}{\forall \; q \in Q, \forall \; (i, j) \in E^o_e}
    \addConstraint{b_q}{\leq t_{i} + T \beta_{qij}, \quad}{\forall \; q \in Q, \forall \; (i, j) \in E^o_e}
    \addConstraint{b_q}{\geq t_{i} + T (\beta_{qij} - 1), \quad}{\forall \; q \in Q, \forall \; (i, j) \in E^o_e}
    \addConstraint{\alpha_{qij} + \beta_{qij} + x_{ij}}{\leq 2, \quad}{\forall \; q \in Q, \forall \; (i, j) \in E^o_e}
\end{mini}

\section{Experimental Results} \label{Appendix D}
The rounded routing costs for the VRP-SA instances, which correspond to Figure \ref{fig:main_result}, are listed in Table \ref{tab:main_result} below.
\renewcommand{\arraystretch}{1.3}
\begin{longtable}{c|c|cc}
    \caption{Routing cost for the VRP-SA instances} \label{tab:main_result} \\ \hline
    \multirow{2}{*}{\textbf{Instances}} & \multirow{2}{*}{\textbf{Cost by HDVs}} & \multicolumn{2}{|c}{\textbf{Cost by HDVs + AVs}}  \\ \cline{3-4}
    & & \textbf{w.o. re-routing priority} & \textbf{w. re-routing priority} \\ \hline
    \endfirsthead

    \hline
    \multirow{2}{*}{\textbf{Instances}} & \multirow{2}{*}{\textbf{Cost by HDVs}} & \multicolumn{2}{|c}{\textbf{Cost by HDVs + AVs}}  \\ \cline{3-4}
    & & w.o. re-routing priority & w. re-routing priority \\ \hline
    \endhead

    \hline
    \endfoot

    \hline
    \endlastfoot

    P-n16-k8 & 600 & 387 & 386 \\
    P-n19-k2 & 280 & 251 & 247 \\
    P-n20-k2 & 282 & 249 & 249 \\
    P-n21-k2 & 282 & 258 & 258 \\
    P-n22-k2 & 288 & 269 & 243 \\
    P-n22-k8 & 750 & 508 & 472 \\
    P-n23-k8 & 694 & 474 & 475\\
    P-n40-k5 & 590 & 501 & 455\\
    P-n45-k5 & 650 & 563 & 532 \\
    P-n50-k7 & 700 & 645 & 579 \\
    P-n50-k8 & 798 & 627 & 624 \\
    P-n50-k10 & 880 & 682 & 680\\
    P-n51-k10 & 958 & 779 & 755 \\
    P-n55-k7 & 710 & 615 & 601\\
    P-n55-k10 & 870 & 685 & 689 \\
    P-n55-k15 & 1204 & 895 & 880\\
    P-n60-k10 & 926 & 876 & 856 \\
    P-n60-k15 & 1230 & 971 & 963 \\
    P-n65-k10 & 992 & 928 & 913 \\
    P-n70-k10 & 1034 & 988 & 926 \\
    P-n76-k4 & 742 & 759 & 722\\
    P-n76-k5 & 780 & 772 & 766 \\
    P-n101-k4 & 876 & 857 & 878 \\ 
\end{longtable}
\end{document}